\def\r{\mathcal R}
\def\R{\mathbb R}
\def\s{\mathbb S}
\def\C{\mathbb C} 
\def\F{{\mathrm F}}
\def\L{\mathrm L}
\def\z{{\bf z}}
\def\w{{\bf w}}
\def\V{\mathrm V}
\def\T{\mathbb T}
\def\X{\mathbb X}
\def\A{\mathbb A}
\def \ch{{\bf H}_{\C}}
\def\R{\mathbb R}
\def\V{\mathrm V}
\def\p{{\bf p}}
\newcommand {\tr }{{\rm tr}}
\newcommand{\SL}{\mathrm{SL}}
\newcommand{\SU}{{\mathrm{SU}}}
\newcommand{\U}{\mathrm{U}}
\newcommand{\PU}{{\mathrm {PU}}}
\def \a{\mathcal A}
\def\P{\mathbb P}
\def\M{\mathbb M}
\newcommand{\fX}{\mathfrak{X}}
\newcommand{\hm}{{\mathrm{Hom}}}
\newcommand{\fR}{\mathfrak{R}}
\newcommand{\fD}{\mathfrak{X_L}}
\def \a {{\bf a}}
\def \r  {{\bf r}}
\def \x  {{\bf x}}
\def \y  {{\bf y}}
\def \q {{\bf q}}
\newtheorem{theorem}{Theorem}[section]
\newtheorem{lemma}[theorem]{Lemma}
\newtheorem{prop}[theorem]{Proposition}
\theoremstyle{definition}
\newtheorem{definition}[theorem]{Definition}
\theoremstyle{remark}
\newtheorem{remark}[theorem]{Remark}
\numberwithin{equation}{section}
\theoremstyle{plain}
\newtheorem{acknowledgement}{Acknowledgement}
\newtheorem{corollary}[theorem]{Corollary}
\numberwithin{equation}{section}
\newcommand{\secref}[1]{Section~\ref{#1}}
\newcommand{\thmref}[1]{Theorem~\ref{#1}}
\newcommand{\lemref}[1]{Lemma~\ref{#1}}
\newcommand{\propref}[1]{Proposition~\ref{#1}}
\newcommand{\corref}[1]{Corollary~\ref{#1}}
\newcommand{\eqnref}[1]{~{\textrm(\ref{#1})}}
\newcommand{\defref}[1]{Definition~(\ref{#1})}
\begin{document}
\title[Conjugation Orbits of Loxodromic pairs in $\SU(n,1)$]{Conjugation Orbits of Loxodromic Pairs in $\SU(n,1)$}
\author[Krishnendu Gongopadhyay  \and Shiv Parsad]{Krishnendu Gongopadhyay \and
 Shiv Parsad}
\address{Indian Institute of Science Education and Research (IISER) Mohali,
 Knowledge City,  Sector 81, S.A.S. Nagar 140306, Punjab, India}
\email{krishnendug@gmail.com, krishnendu@iisermohali.ac.in}
\address{Indian Institute of Science Education and Research (IISER) Bhopal, 
Bhopal Bypass Road, Bhauri,
Bhopal 462 066,
Madhya Pradesh, India} 
\email{parsad.shiv@gmail.com}
\date{\today}
  \subjclass[2010]{Primary 20H10; Secondary 30F40, 51M10, 15B57.}
 \keywords{ complex hyperbolic space, surface group representations,  traces}
\begin{abstract}

Let $\ch^n$ be the $n$-dimensional complex hyperbolic space and $\SU(n,1)$ be the (holomorphic) isometry group. An element $g$ in $\SU(n,1)$ is called \emph{loxodromic} or \emph{hyperbolic} if it has exactly two fixed points on the boundary $\partial \ch^n$.  We classify $\SU(n,1)$ conjugation orbits of pairs of loxodromic  elements in $\SU(n,1)$. 
\end{abstract}
\maketitle
\section{Introduction}

Let $\ch^n$ be the $n$-dimensional complex hyperbolic space. The group $\SU(n,1)$ acts by the holomorphic isometries on $\ch^n$.   An element of $\SU(n,1)$ is called \emph{hyperbolic} or \emph{loxodromic} if it has exactly two fixed points on the boundary $\partial \ch^n$ of the complex hyperbolic space.  

Let $\F_2=\langle x, y \rangle$ be a two-generator free group.  Let  $\fX(\F_2, \SU(n,1))$ denote the orbit space $\hm(\F_2, {\rm SU}(n,1))/{\rm SU}(n,1)$ of the conjugation action of $\SU(n,1)$ on the space $\hm(\F_2, \SU(n,1))$ of faithful representations of $\F_2$ into $\SU(n,1)$.  Let $\fD(\F_2, \SU(n,1))$ denote the subset of $\fX(\F_2, \SU(n,1))$ consisting of representations $\rho$ such that both $\rho(x)$ and $\rho(y)$ are loxodromic elements in $\SU(n,1)$ having no common fixed point. A problem of geometric interest is to 
 parametrize this subset $\fD(\F_2, \SU(n,1))$.  The motivation for doing this is the construction of Fenchel-Nielsen coordinates in the classical Teichm\"uller space that is built upon a parametrization of discrete, faithful, and totally loxodromic representations in $\fD(\F_2, {\rm \SL}(2, \R))$.  This is rooted back to the classical works of Fricke \cite{f} and Vogt \cite{v} from whom it follows that a non-elementary two-generator free subgroup of ${\rm SL}(2, \R)$ is determined  up to conjugation by the traces of the generators and their product, see Goldman \cite{gold2} for an exposition.

 The space $\fD(\F_2, \SU(n,1))$ contains the discrete, faithful, and  totally loxodromic or type-preserving  representations. These are curious families of representations and has not been well-understood even in the case $n=2$. We refer to the surveys \cite{ppf}, \cite{sc}, \cite{will3} and the references therein for an up to date account of the investigations in this direction. 

For notational convenience, an element in $\fD(\F_2, \SU(n,1))$ will be called a `loxodromic generated representation', or simply, a `loxodromic representation'.  Most of the existing works to understand $\fD(\F_2, \SU(n,1))$ is centered around the case $n=2$, though it would  be interesting to generalize some of above mentioned works for $n>2$. A starting point for this could be the classification of pairs of elements in ${\rm SU}(n, 1)$ up to conjugacy. In other words, the problem would be to determine a representation in $\fD(\F_2, \SU(n,1))$. 

 To do this, following classical invariant theory, one approach is to obtain this classification using trace invariants like the coefficients of the characteristic polynomials and their compositions.  In low dimensions, this approach gives some understanding of the loxodromic pairs. Will \cite{will, will2} classified the loxodromic pairs in $\SU(2,1)$. Will's classification is built upon the work of Lawton \cite{law}, also see  \cite{wen},  who obtained trace parameters for elements in $\fX(\F_2, \SL(3, \C))$. It follows from these works that an irreducible  loxodromic representation in $\fX(\F_2, \SU(2,1))$  is determined by the traces of the generators and the traces of three more compositions of the generators. In an attempt to generalize this work,  Gongopadhyay and Lawton \cite{gl} have classified polystable pairs (that is, pairs whose conjugation orbit is closed in the character variety) in $\SU(3,1)$ using 39 real parameters. At the same time, it has been shown that the real dimension of the \hbox{smallest}  possible system of such real parameters to determine any polystable pair is 30. Explicit set of trace coordinates for $\fX(\F_2, \SU(n,1))$ is not available in the literature for higher values of $n$. A set of such coordinates may be obtained for arbitrary $n$ using the fact that $\SU(n,1)$ is a real form of $\SL(n+1, \C)$.  The trace coordinates in $\fX(\F_2, \SL(n+1,\C))$ may be obtained from the work of Lawton \cite{law2}, however,  minimal set of  trace parameters along with relations between them, is still unknown except for a few lower values of $n$,  eg. \cite{ds, do}.

Using the geometry of the boundary points, there is another approach to classify pairs of loxodromic elements in low dimensions. This was used by Wolpert \cite{sw} to parametrize surface group representations into $\SL(2, \R)$. In the complex hyperbolic setting, Parker and Platis \cite{pp} obtained a classification of the loxodromic pairs in $\SU(2,1)$ and used that to parametrize  loxodromic representations into $\SU(2,1)$. Independently, Falbel \cite{falbel}, also see \cite{fap}, has taken a viewpoint using configuration of four points on the boundary and classified the loxodromic pairs in $\SU(2,1)$ up to conjugacy. Both Parker and Platis, and Falbel have associated a point on an algebraic variety, that along with the traces of the elements  classified the loxodromic pairs. 
Cunha and Gusevskii \cite{cugu1} associated traces of the elements and $\SU(2,1)$-orbits of ordered tuples of fixed points to achieve another classification of  loxodromic representations.  Cunha and Gusevskii's work also gave an explicit embedding of the space $\fD(\F_2, \SU(2,1))$ into an affine space. 

We have asked this question for $\SU(3,1)$ and obtained partial results in this direction in \cite{gp2}, where we  have  classified generic pairs of loxodromics in $\SU(3,1)$ called `non-singular'.  We introduced new parameters analogous to the Kor{\'a}nyi-Reimann cross ratios, but involving fixed points and polar points, to achieve this classification in \cite{gp2}.   In view of this, the main result in \cite{gp2} provides a smaller number of $15$ real dimensional coordinates that is enough to determine generic pairs of loxodromic elements in ${\rm SU}(3,1)$, and using that a Fenchel-Nielsen type coordinate system was given on the `non-singular' components of the character variety. 

 In this paper, we classify $\SU(n,1)$-conjugation orbits of pairs of loxodromic elements in $\SU(n,1)$ using a  geometric approach.  The key intuitive idea to do this is to view a pair of loxodromics as a pair of `moving orthonormal frames' and then attach a tuple of boundary points to it that corresponds to the `moving chains' defining the pair. The collection of such tuples of boundary points form a topological space that comes from the $\SU(n,1)$ configuration space $\M(n, m)$ of ordered $m$-tuples of points on $\partial \ch^n$. The space $\M(n,m)$ has been described by Cunha and Gusevskii in \cite{cugu1}.

 Given a loxodromic element $A$, we choose a normalized eigenbasis that corresponds to an orthonormal frame of $\C^{n,1}$. Now to a specified point $p$ on the complex hyperbolic line joining the fixed points, we choose a polar eigenvector $x$. This gives a chain spanned by the $p$ and $x$, and we mark it with a point. 
These marked points, along with the fixed points, determine the eigenframe of a loxodromic element that we started with. Given a pair, we do this for both elements in the pair. This choice is not canonical. However, given the algebraic multiplicity of a loxodromic pair, the orbit of such points under the group action induced by the change of eigenframes is canonical, and we denote the space of such orbits by $\mathcal{OL}_n$.  The point on this space that corresponds to a pair, is called the \emph{canonical orbit} of the pair. This space has a topological structure that comes from the space $\M(n,m)$.  The advantage of associating canonical boundary points to a loxodromic pair is that it enables us to obtain a parametric description of arbitrary elements in $\fD(\F_2, \SU(n,1))$. 
\begin{theorem}\label{mthm}
Let $\rho$ be an element in $\fD(\F_2, \SU(n,1))$, $\F_2=\langle x, y \rangle$. Then $\rho$ is determined uniquely by $\tr(\rho(x)^i)$, $\tr(\rho(y)^i)$, $1\leq i \leq \lfloor (n+1)/2 \rfloor$, and the  canonical orbit of $(\rho(x), \rho(y))$. 
\end{theorem}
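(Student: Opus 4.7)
The plan is to take two representatives $\rho_1,\rho_2\in\fD(\F_2,\SU(n,1))$ with identical trace data and identical canonical orbit, set $A_j=\rho_j(x)$ and $B_j=\rho_j(y)$, and produce a single $g\in\SU(n,1)$ with $gA_1g^{-1}=A_2$ and $gB_1g^{-1}=B_2$. The argument proceeds in three stages: recover eigenvalues from traces, align the boundary configurations using the canonical orbit, and reconstruct the matrices.

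For the first stage, any $A\in\SU(n,1)$ lies in $\U(n,1)$, so its spectrum is invariant under $\lambda\mapsto\overline{\lambda}^{-1}$; combined with $\det A=1$ this yields the Hermitian symmetry $e_k(\lambda)=\overline{e_{n+1-k}(\lambda)}$ on the elementary symmetric functions of the eigenvalues. Hence the full characteristic polynomial is determined by $e_1,\ldots,e_{\lfloor (n+1)/2\rfloor}$, and Newton's identities express these as polynomials in the power sums $\tr(A),\ldots,\tr(A^{\lfloor (n+1)/2\rfloor})$. So $A_1$ and $A_2$ share characteristic polynomial, and since loxodromic elements are diagonalizable their eigenvalue multisets coincide; likewise for $B_1$ and $B_2$.

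For the second stage, I choose for each pair $(A_j,B_j)$ a normalized eigenbasis and form the associated tuple $\phi_j$ of fixed points and chain-marking boundary points on $\partial\ch^n$, so that $[\phi_j]\in\mathcal{OL}_n$. The assumption $[\phi_1]=[\phi_2]$ yields $h\in\SU(n,1)$ and an eigenframe change $\kappa$ with $h\cdot(\kappa\cdot\phi_1)=\phi_2$. Replacing $\rho_1$ by its conjugate $h\rho_1h^{-1}$ and the chosen eigenbasis by its $\kappa$-translate leaves the conjugation class of $\rho_1$ unaffected, and after this reduction $\phi_1=\phi_2$ literally as ordered tuples on $\partial\ch^n$. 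For the third stage, the coinciding fixed points give identical attracting and repelling null lines in $\C^{n,1}$, and the coinciding chain markings together with the orthonormal normalization pin down the polar eigenvectors; combined with the eigenvalue equality from stage one this forces $A_1=A_2$, and the parallel argument on the $B$-part yields $B_1=B_2$, so $h$ is the required conjugator.

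The key technical point is the compatibility of $\kappa$ across the two halves of $\phi_j$: the group defining $\mathcal{OL}_n$ must be exactly the residual freedom in choosing a normalized eigenbasis for the whole pair $(A,B)$, so that an adjustment of the polar eigenvectors of $A$ aligning the $A$-markings also correctly transports the $B$-markings. A careful case analysis will likely be needed when eigenvalues have multiplicities or when a polar eigenvector of $A$ lies inside a polar eigenspace of $B$, and the hypothesis built into $\fD$ that $A$ and $B$ share no fixed point should prevent the most degenerate configurations from arising.
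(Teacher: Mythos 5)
Your proposal is correct and follows essentially the same route as the paper: the traces recover the spectrum via the Hermitian symmetry of the characteristic polynomial (Lemma~\ref{lile1}, Corollary~\ref{licor3}), equality of canonical orbits produces a conjugator in $\SU(n,1)$ aligning the ordered eigenframes up to the $Z(A)\times Z(B)$ action (Lemma~\ref{lem:normalization}), and that residual action is harmless. The ``key technical point'' you flag---compatibility of the eigenframe change across the pair---is precisely what the paper's proof of Theorem~\ref{thm2} settles by observing that the block-unitary change-of-frame matrix $M$ commutes with the diagonal form $D_A$.
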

In other words,  what we shall prove is the following. 
\begin{theorem}\label{thm2}
Let $(A, B)$ be a loxodromic pair in $\SU(n,1)$. Then $(A, B)$ is determined up to conjugation in $\SU(n,1)$,  by the the traces ${ \tr}({A^i})$, ${\tr}( B^i)$,  $1\leq i \leq \lfloor (n+1)/2 \rfloor$, and the  canonical orbit of $(A, B)$ on $\mathcal{OL}_n$. 
\end{theorem}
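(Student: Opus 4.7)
The plan is to reconstruct the pair $(A,B)$ up to $\SU(n,1)$-conjugation in two stages: first recover the individual conjugacy classes of $A$ and $B$ from the trace data, then use a representative of the canonical orbit to pin down the joint configuration.

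For the first stage, note that any $A \in \SU(n,1)$ preserves a Hermitian form $H$ of signature $(n,1)$, so that $H^{-1}A^{*}H = A^{-1}$. Hence the multiset of eigenvalues of $A$ is stable under $\lambda \mapsto 1/\bar\lambda$, and combined with $\det A = 1$ this forces the coefficients of the characteristic polynomial to satisfy $e_{n+1-i}(A)=\overline{e_i(A)}$. Newton's identities show that the given trace data $\tr(A),\ldots,\tr(A^{\lfloor (n+1)/2 \rfloor})$ determine $e_1,\ldots,e_{\lfloor (n+1)/2\rfloor}$, and the symmetry supplies the remaining elementary symmetric functions. The characteristic polynomial, and hence the $\SU(n,1)$-conjugacy class of $A$ (since a loxodromic is diagonalizable and its spectral decomposition automatically pairs the two null eigendirections under the Hermitian form while making the unimodular eigenspaces orthogonal to each other), is thereby recovered; the same applies to $B$.

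For the second stage, fix a representative of the canonical orbit. The boundary data supplies the two fixed points of $A$, which pin down the null eigendirections with attracting and repelling eigenvalues identified from the previous step. A reference point on the complex axis of $A$ together with a marked point on each chain through that reference point then recovers every polar eigendirection: in the two-dimensional Hermitian subspace spanned by the reference point and the polar direction, the marked null line determines that direction up to a unit scalar. This yields a complete orthonormal eigenframe of $A$, and the eigenvalue data reconstructs $A$ as a matrix in that frame; the $B$-part of the representative recovers $B$ in the same way. Two different representatives of the same canonical orbit differ by the group action defining $\mathcal{OL}_n$, which extends to an element of $\SU(n,1)$ on the ambient space, so the two reconstructed pairs are $\SU(n,1)$-conjugate. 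Conversely, a conjugate pair produces the same canonical orbit and the same traces, yielding uniqueness.

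The main obstacle lies in the presence of repeated eigenvalues. When a polar eigenvalue of $A$ has multiplicity $k>1$, the corresponding eigenspace is $k$-dimensional and no single polar eigenvector is canonically selected; the construction of $\mathcal{OL}_n$ must therefore quotient by the resulting $\U(k)$-action on eigenframes within each eigenspace, and the marked boundary points must encode exactly enough data so that, after this quotient, the eigenframe is recovered up to an allowed symmetry. Verifying that the geometric data (chains and marked points) interacts cleanly with these degeneracies, so that the quotient described in the introduction is precisely the right one to make the reconstruction both well-defined and injective, is the technical heart of the proof and is exactly where the phrase ``given the algebraic multiplicity of a loxodromic pair'' from the introduction has bite.
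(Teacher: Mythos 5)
Your overall strategy is the paper's: recover the eigenvalues of $A$ and $B$ from the traces via the self-inversive symmetry $e_{n+1-i}=\overline{e_i}$ of the characteristic polynomial (Lemma~\ref{lile1}, Corollary~\ref{cor:conjugacy}), and recover the eigenframes from the boundary tuple (Lemma~\ref{lem:normalization}). But there is a genuine gap at exactly the point you flag and then defer: the case of repeated eigenvalues. You assert that two representatives of the same canonical orbit ``differ by the group action defining $\mathcal{OL}_n$, which extends to an element of $\SU(n,1)$ on the ambient space.'' That is not true: the group $G$ representing $Z(A)\times Z(B)$ acts on the $A$-frame and the $B$-frame by \emph{independent} block-unitary transformations $M_A\in Z(A)$ and $M_B\in Z(B)$, and unless these happen to coincide there is no single isometry of $\C^{n,1}$ realizing the action on the full $(2n+2)$-tuple. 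So your well-definedness/injectivity argument, as stated, does not close, and your final paragraph concedes that the degenerate case is unverified.

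The resolution is simpler than the ``technical heart'' you anticipate, and it is what the paper does. Write $A=C_AD_AC_A^{-1}$ with $D_A$ diagonal and constant on each eigenvalue block. A change of eigenframe within the eigenspaces replaces $C_A$ by $C_AM$ with $M$ block-diagonal, the block sizes matching the multiplicities; since $D_A$ is scalar on each block, $MD_AM^{-1}=D_A$, so the reconstructed operator $C_AMD_AM^{-1}C_A^{-1}=A$ is \emph{literally unchanged}, not merely conjugated. Hence if $(A,B)$ and $(A',B')$ have the same multiplicity type, traces and canonical orbit, Lemma~\ref{lem:normalization} produces a single $C\in\SU(n,1)$ carrying the frame of $A$ to that of $A'$ up to such an $M$, and the frame of $B$ to that of $B'$ up to another such $M$, and the commutation kills the two ambiguities separately: $CAC^{-1}=C_{A'}MD_AM^{-1}C_{A'}^{-1}=A'$, and likewise for $B$. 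Note also that before writing $D_A=D_{A'}$ one must match which eigenvalue goes with which marked point; the paper handles this by fixing the multiplicity type $(a_1,\ldots,a_k,b_1,\ldots,b_l)$ and the canonical ordering of the frame, a point your reconstruction passes over.
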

Further if we choose one representative from each orbit, it associates numerical conjugacy invariants. Given $[(A, B)] \in \fD(\F_2, \SU(n,1))$, we may choose a  representative $\mathfrak p$ of $(A, B)$ from the corresponding class in $\mathcal{OL}_n$. Such a choice associates a point $[\mathfrak p]$ on the space $\M(n, 2n+2)$ to the class $[(A, B)]$. This class $[\mathfrak p]$ is called a \emph{reference orbit} of $(A, B)$. After such a choice, it is now possible to associate conjugacy invariants like angular invariants and  cross ratios to the pair $(A, B)$. Such a  chosen set of cross ratios are termed as `reference cross ratios'. With this notion, we have the following. 
\begin{theorem}\label{mt2}
 Let $(A,B)$ be a loxodromic pair in $\SU(n,1)$. Then $(A,B)$ is determined up to conjugation by $\SU(n,1)$,  by the traces $ {\tr}(A^i),~{\tr}( B^i)$,  $1\leq i \leq \lfloor (n+1)/2 \rfloor$, the angular invariant and the reference  cross ratios. 
\end{theorem}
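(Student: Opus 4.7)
The plan is to derive Theorem \ref{mt2} as a corollary of Theorem \ref{thm2} together with a classical parametrization of $\SU(n,1)$-orbits of boundary tuples by angular invariants and cross ratios. By Theorem \ref{thm2}, the pair $(A,B)$ is determined up to conjugation by the trace data $\tr(A^i),\tr(B^i)$, $1\le i\le \lfloor (n+1)/2\rfloor$, together with its canonical orbit in $\mathcal{OL}_n$. Since a reference orbit $[\mathfrak{p}]\in\M(n,2n+2)$ is, by construction, a choice of representative within a class in $\mathcal{OL}_n$, knowing $[\mathfrak{p}]$ (which is an $\SU(n,1)$-equivalence class of ordered tuples of boundary points) determines the canonical orbit. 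Therefore it suffices to show that the angular invariant together with the reference cross ratios determine the class $[\mathfrak{p}]\in\M(n,2n+2)$.

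First I would unpack what $\mathfrak{p}$ consists of: for each element of the pair, we record the two boundary fixed points of the loxodromic element together with the $n-1$ marked boundary points defining the chosen chains, giving $2(n+1)=2n+2$ labelled boundary points. The reference orbit is then literally a point of the $\SU(n,1)$-configuration space $\M(n,2n+2)$ studied by Cunha and Gusevskii \cite{cugu1}. In their framework, the $\SU(n,1)$-orbit of an ordered $m$-tuple of boundary points in general position is completely determined by a fixed family of K{\"o}r{\'a}nyi--Reimann cross ratios plus the Cartan angular invariants of the triples; the reference cross ratios appearing in the statement are a particular such family.

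The next step is to verify that the $2n+2$ boundary points of a loxodromic pair are in the required general position and that the cross ratios one selects as the reference cross ratios are a complete set in the Cunha--Gusevskii sense. General position follows from the loxodromic hypothesis and the hypothesis in $\fD(\F_2,\SU(n,1))$ that $A$ and $B$ share no common fixed point, together with the construction of the marked chain points, which ensures none of the auxiliary points coincide nor lie on a common complex hyperbolic subspace of too small dimension. Completeness of the reference cross ratios is what identifies them as genuine coordinates on a slice of $\M(n,2n+2)$; this is precisely the assertion that the orbit $[\mathfrak{p}]$ is reconstructible from the prescribed numerical data.

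The main obstacle I anticipate is the bookkeeping at this final step: one must show that the particular collection of cross ratios singled out as the reference cross ratios, together with the angular invariant, reconstruct the entire $(2n+2)$-tuple up to $\SU(n,1)$, without any residual ambiguity. Once that is established, combining it with Theorem \ref{thm2} yields the conclusion that $(A,B)$ is determined up to $\SU(n,1)$-conjugation by the traces $\tr(A^i),\tr(B^i)$, the angular invariant, and the reference cross ratios, completing the proof.
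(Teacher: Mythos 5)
Your overall strategy matches the paper's: use Cunha--Gusevskii (\thmref{cugu}) to recover the $\SU(n,1)$-class of the $(2n+2)$-tuple of eigenpoints from the angular invariant and the reference cross ratios, and then use eigenframe rigidity to recover the pair from that class together with the traces. However, you route the second step through \thmref{thm2} and the canonical orbit in $\mathcal{OL}_n$, whereas the paper proves a theorem tailored to this purpose, namely \thmref{mt1}: the traces $\tr(A^i)$, $\tr(B^i)$ together with the \emph{reference} orbit on $\M(n,2n+2)$ determine the conjugacy class, proved directly from \lemref{lem:normalization} and \corref{licor3}. Your bridging claim --- that the reference orbit ``determines the canonical orbit'' because it is ``a choice of representative within a class in $\mathcal{OL}_n$'' --- is asserted rather than proved, and it is not immediate: the reference eigenpoint (\secref{tol}) is normalized by $\langle \a_A,\a_B\rangle=1$ and lives as a single point of $\M(n,2n+2)$, while the canonical eigenpoint (\secref{ntol}) is normalized by $\langle \r_A,\a_B\rangle=1$ and its orbit is taken under the group $G$, whose $\C^{\ast}$-factor acts diagonally and therefore does not carry one normalization slice onto the other. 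Justifying the passage from one to the other requires exactly the frame-matching argument via \lemref{lem:normalization} that constitutes the proof of \thmref{mt1}, so nothing is saved by the detour; you should either prove that bridge or simply invoke \thmref{mt1}. Finally, your worry about ``general position'' is a red herring: \thmref{cugu} as used here requires only that the $2n+2$ boundary points be pairwise distinct, which the construction of the reference eigenpoint arranges explicitly by perturbing a marked point along its chain when a coincidence occurs; no condition on the points spanning subspaces of large dimension is needed (and indeed the marked points $p_3,\ldots,p_{n+1}$ deliberately lie on chains through a common point of the complex line $L_A$).
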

We emphasize here that the choice of these numerical invariants depends on the choice of the representative of the orbit class. We do not know how to make it canonical.  Using this viewpoint, one can project the loxodromic elements as a tuple of the boundary of the complex hyperbolic space. We hope this viewpoint will be useful in the understanding of the loxodromic representations.  Further we discuss special classes of pairs for whom the choices of the conjugacy invariants are canonical. The generic pairs of $\SU(3,1)$ classified in \cite{gp2} were called `non-singular'. Here we generalize that notion to $\SU(n,1)$. We show that for the non-singular pairs in $\SU(n,1)$ the canonical orbit projects to a point, and hence one can canonically associate numerical invariants to classify them.

After discussing some preliminary notions in \secref{prel}, we review loxodromic elements of $\SU(n,1)$  in \secref{lox}. The detailed notions involved in the above theorems are discussed in the following sections. We associate tuples of boundary points to a loxodromic element in \secref{canp}.  Specifying such an association, we prove \thmref{mt2} in \secref{tol}. By the work of Cunha and Gusevskii \cite{cugu1}, this also gives us conjugacy invariants to be associated to such loxodromic pairs. In \secref{ntol}, we make this association well-defined by associating the whole orbit of points  to a pair and prove \thmref{thm2}.  In \secref{good}, we construct examples of two classes of loxodromic pairs, including the non-singular ones,  for which the associated boundary tuples define a single orbit and association of the conjugacy invariants is canonical. 
 
\section{Preliminaries}\label{prel} 
\subsection{Complex Hyperbolic Space} 
Let $\V=\C^{n,1}$ be the complex vector space $\C^{n+1}$ equipped with the Hermitian form of signature $(n,1)$ given by $$\langle\z,\w\rangle=\w^{\ast}H\z=z_{1}\bar w_{n+1}+z_2\bar w_2+\cdots+z_n\bar w_n
+z_{n+1}\bar w_1,$$
where $\ast$ denotes conjugate transpose. The matrix of the Hermitian form is given by 
\begin{center}
$H=\left[ \begin{array}{ccc}
            0 & 0 & 1\\
           0 & I_n & 0 \\
 1 & 0 & 0 \\
          \end{array}\right]$
\end{center}
If $H^{\prime}$ is any other $(n+1)\times (n+1)$ Hermitian matrix with signature $(n,1)$, then there is a $(n+1)\times(n+1)$ matrix $C$ so that $C^{\ast}H^{\prime}C=H$.  

\medskip We consider the following subspaces of $\C^{n,1}:$
$$\V_{-}=\{\z\in\C^{n,1}:\langle\z,\z \rangle<0\}, ~ \V_+=\{\z\in\C^{n,1}:\langle\z,\z \rangle>0\},~\V_{0}=\{\z\in\C^{n,1}:\langle\z,\z \rangle=0\}.$$
Let $\P:\C^{n,1}-\{0\}\to\C \P^n$ be the canonical projection onto complex projective space. Then complex hyperbolic space $\ch^n$ is defined to
be $\P (\V_{-})$. The ideal boundary $\partial\ch^n$ is $\P( \V_{0})$. The canonical projection of a vector $\z\in \V_{-}$ is given by 
\hbox{$\P(\z)=(z_1/z_{n+1},\ldots,z_n/z_{n+1})$}. Therefore we can write $\ch^n=\P(\V_{-})$ as
$$\ch^n=\{(w_1,\ldots,w_n)\in\C^n \ : \ 2\Re(w_1)+|w_2|^2+\cdots+|w_n|^2<0\}.$$  
This gives the Siegel domain model of $\ch^n$. 
There are two distinguished points in $\V_{0}$ which we denote by  $\bf{o}$ and $\bf\infty$ given by 
$$\bf{o}=\left[\begin{array}{c}
               0\\\vdots\\0\\1\\
              \end{array}\right],
 ~~\infty=\left[\begin{array}{c}
               1\\0\\\vdots\\0\\
              \end{array}\right].$$\\
Then we can write $\partial\ch^n=\P(\V_{0})$ as
$$\partial\ch^n-\infty=\{(z_1,\ldots,z_n)\in\C^n:2\Re(z_1)+|z_2|^2+\cdots+|z_n|^2=0\}.$$
 Conversely,  given a point $z$ of $\ch^n=\P(\V_{-})\subset\C \P^n$,  we may lift $z=(z_1,\ldots,z_n)$ to a point $\z$ in $\V_{-}$, called the \emph{standard lift} of $z$, 
 by writing in non-homogenous coordinates as
 $$\z=\left[\begin{array}{c}
                z_1\\\vdots\\z_n\\1\\
               \end{array}\right].$$
\subsection{Isometries}   
 Let ${\rm U}(n,1)$ be the group of matrices which preserve the Hermitian form $\langle .,.\rangle$. Each such matrix $A$ satisfies the relation $A^{-1}=
H^{-1}A^{\ast}H$,  where $A^{\ast}$ is the conjugate transpose of $A$. The holomorphic isometry group of  $\ch^n$ is the projective unitary group 
$${\rm PSU}(n,1)={\rm SU }(n,1)/\{ I,\omega I,\ldots,\omega^n I\},$$ $\omega=\cos(2\pi/(n+1))+i\sin(2\pi/(n+1))$.  It is often more convenient to lift to the $(n+1)$-fold cover ${\rm SU }(n,1)$ to look at the action of the isometries. 

\subsection{Cartan's angular invariant}\label{cai}
Let $z_1,~z_2,~z_3$ be three distinct points of $\partial\ch^n$ with lifts $\z_1,~\z_2$ and $\z_3$ respectively. Cartan's angular invariant is defined as 
follows:
$$\A(z_1,~z_2,~z_3)=arg(-\langle \z_1,\z_2 \rangle \langle \z_2,\z_3 \rangle \langle \z_3,\z_1 \rangle).$$
The angular invariant is invariant under ${\rm SU }(n,1)$ and independent of the chosen lifts. Angular  invariant determines any triples of distinct points on $\partial\ch^n$ up to ${\rm SU }(n,1)$ equivalence. Also the following holds. For proofs see  \cite{gold}.
\begin{prop}\label{cra}
Let $z_1$, $z_2$, $z_3$ be three distinct points of $\partial \ch^n$ and let $\A=\A(z_1, z_2, z_3)$ be their angular invariant. Then $\A \in [-\frac{\pi}{2}, \frac{\pi}{2}]$.  $\A=\pm \frac{\pi}{2}$ if and only if $z_1$, $z_2$, $z_3$ lie on the same chain. $\A=0$ if and only if $z_1$, $z_2$, $z_3$ lie on a totally real totally geodesic subspace. 
\end{prop}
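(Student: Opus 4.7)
The plan is to route everything through the $3 \times 3$ Gram matrix of chosen lifts and read off both parts of the statement from the signature of the restricted Hermitian form. Write $a = \langle \z_1, \z_2\rangle$, $b = \langle \z_2, \z_3\rangle$, $c = \langle \z_3, \z_1\rangle$ and $T = -abc$, so that $\A = \arg T$. The Gram matrix
$$G = \begin{pmatrix} 0 & a & \bar c \\ \bar a & 0 & b \\ c & \bar b & 0 \end{pmatrix}$$
has zero diagonal because each $\z_i \in \V_0$, and a cofactor expansion along the first row yields $\det G = abc + \overline{abc} = -2\Re T$. Two preliminaries enter here: since the Hermitian form on $\C^{n,1}$ has Witt index $1$, no two linearly independent null vectors are orthogonal, so $a, b, c$ are all nonzero and $\A$ is defined; and rescaling $\z_i \mapsto \lambda_i \z_i$ multiplies $abc$ by the positive real $|\lambda_1 \lambda_2 \lambda_3|^2$, so $\arg T$ is genuinely an invariant of the triple. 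A short orbit computation moreover shows that $\arg(abc) \pmod{2\pi}$ is the only invariant of $(a, b, c)$ under the diagonal $(\C^*)^3$-action, and the orbit of $(-1, -1, -1)$ is precisely the locus $\arg(abc) = \pi$.

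The main step is then a dichotomy on $d := \dim_{\C} \mathrm{span}_\C(\z_1, \z_2, \z_3)$; distinctness of the boundary points forces $d \geq 2$. If $d = 2$, the three lifts lie on a common complex projective line $L$, so the points lie on the chain $L \cap \partial \ch^n$; the linear dependence makes $G$ singular, so $\Re T = 0$ and $\A = \pm \pi/2$. Conversely, $\A = \pm \pi/2$ forces $\det G = 0$, hence $d = 2$, and we recover the chain case. If $d = 3$, the restricted Hermitian form on the span is non-degenerate of signature $(p, q)$ with $p + q = 3$ and $q \leq 1$; the presence of null vectors rules out $(3,0)$, forcing $(2,1)$, so $\det G < 0$, $\Re T > 0$, and $\A \in (-\pi/2, \pi/2)$. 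This establishes both the global bound $\A \in [-\pi/2, \pi/2]$ and the chain characterization.

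For the $\A = 0$ case, the orbit computation above lets me rescale lifts so that $a = b = c = -1$ precisely when $abc \in \R_{<0}$, i.e., when $\A = 0$. Under this normalization $G$ becomes a real symmetric matrix of signature $(2,1)$, so the Hermitian form restricts to a real-valued quadratic form on the real subspace $W_\R := \R\z_1 + \R\z_2 + \R\z_3$. I would extend $W_\R$ to a real form $\R^{n,1} \subset \C^{n,1}$ by realizing it as the fixed locus of a compatible antiholomorphic involution $\sigma$: on the complex span $W$ define $\sigma$ as the antilinear extension of $\z_i \mapsto \z_i$, and on the positive-definite orthogonal complement $W^\perp$ pick any antilinear isometric involution, then take the direct sum. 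The projectivization $\P(\V_- \cap \R^{n,1})$ is then a totally real totally geodesic copy of $\rh^n$ in $\ch^n$ containing $z_1, z_2, z_3$. The converse is easier: on such an $\rh^n$ coming from a real form, real lifts give $a, b, c \in \R$, so $T \in \R$, and combined with $\Re T > 0$ from the $d = 3$ case this forces $T > 0$ and $\A = 0$. The main technical obstacle I anticipate is the forward direction of this last step---checking carefully that the antiholomorphic involution constructed piecewise from $W$ and $W^\perp$ yields an honest totally real totally geodesic submanifold of $\ch^n$ in the paper's sense, rather than merely a real-linear subspace of $\C^{n,1}$.
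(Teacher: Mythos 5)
Your argument is correct, but note that the paper does not actually prove Proposition~\ref{cra}: it is quoted as a standard fact with the proof deferred to Goldman's book \cite{gold}, so there is no internal proof to compare against. What you have written is essentially the standard (Goldman-style) argument via the Gram matrix $G$ of the lifts: the identity $\det G=-2\Re T$, the dichotomy on $d=\dim_{\C}\mathrm{span}(\z_1,\z_2,\z_3)$, and the $(\C^*)^3$-orbit normalization $a=b=c=-1$ in the case $\arg(abc)=\pi$ are all sound, and the two-way implications you draw from them do establish all three assertions. Two small points deserve an explicit line. First, in the case $d=3$ you assert that the restricted form is non-degenerate; this does not follow merely from the signature bound $q\le 1$, but it does follow from your own earlier observation: $\z_1,\z_2$ span a non-degenerate $(1,1)$-plane $U$ (two non-proportional null vectors cannot be orthogonal), $U^{\perp}$ is positive definite, and $W=U\oplus(W\cap U^{\perp})$ then has signature $(2,1)$, whence $\det G<0$. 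Second, the ``technical obstacle'' you flag at the end is not really an obstacle: a real form $\R^{n,1}\subset\C^{n,1}$ obtained as the fixed set of an antiholomorphic isometric involution $\sigma$ (your piecewise construction on $W\oplus W^{\perp}$ produces exactly this) projects to the fixed-point set of an antiholomorphic isometry of $\ch^n$, which is automatically totally geodesic and totally real; alternatively, one can avoid the extension altogether by taking $\P(\V_-\cap W_{\R})$, a totally real totally geodesic $\rh^2$ whose boundary already contains $z_1,z_2,z_3$.
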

\subsection{The Kor{\'a}nyi-Reimann cross ratio}\label{cr}
Given a quadruple of distinct points $(z_1, z_2, z_3, z_4)$ on $\partial \ch^n$, their Kor{\'a}nyi-Reimann cross ratio $\X(z_1, z_2, z_3, z_4)$ is defined by
$$\X(z_1, z_2, z_3, z_4)=\frac{\langle {\bf z}_3, {\bf z}_1 \rangle \langle {\bf z}_4, \bf z_2 \rangle} { \langle {\bf z}_4, {\bf z}_1\rangle \langle   {\bf z}_3, {\bf z}_2 \rangle},$$
where, for $i=1,2,3,4$,  ${\bf z}_i$ is the lift of $z_i$. It can be seen easily that $\X$ is invariant under ${\rm SU}(n,1)$ action and independent of the chosen lifts of $z_i$'s. For more details on cross ratios see~\cite{gold}. 

Let $p = (z_1,\ldots,z_m)$ be an ordered $m$-tuple of distinct points of $\partial\ch^n$. Following Cunha and Guseveskii \cite{cugu1}, we associate to $p$ the following
numerical invariants:
$$\A(p)=\A(z_1,z_2,z_3), ~  ~\X_{2j}(p)=\X(z_1,~z_2,~z_3,~z_j)$$ 
$$\X_{3j}(p)=\X(z_1,~z_3,~z_2,~z_j) \text{ and }\X_{kj}(p)=\X(z_1,~z_k,~z_2,~z_j),$$
where $m\geq 4, 4 \leq j \leq m, 4 \leq k \leq m-1, k < j$.
\noindent Using the theory of Gram matrices, Guseveskii~\cite{cugu1} have obtained the following result.
\begin{theorem}\label{cugu}{\rm(Cunha and Gusevskii)}
Let $p = (z_1,\ldots,z_m),p' = (z_1',\ldots,z_m')$ be two ordered $m$-tuples of distinct points of $\partial\ch^n$. Suppose that for $m\geq 4, 4 \leq j \leq m, 4 \leq k \leq m-1, k < j$,
$\A(p)=\A(p'),\X_{2j}(p)=\X_{2j}(p'),\X_{3j}(p)=\X_{3j}(p'),\X_{kj}(p)=\X_{kj}(p')$. Then there exists $A\in {\rm SU}(n,1)$ such that $A(z_i)=z_i'$, $i=1, \ldots, m$. 
\end{theorem}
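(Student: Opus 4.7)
The plan is to deduce Theorem~\ref{mt2} directly from Theorem~\ref{thm2} by translating the abstract canonical-orbit datum on $\mathcal{OL}_n$ into the concrete numerical data of the angular invariant and reference cross ratios of the $(2n+2)$-tuple of boundary points attached to a reference orbit; the engine performing this translation is the Cunha--Gusevskii classification of ordered boundary tuples, Theorem~\ref{cugu}.

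First I would unpack the bookkeeping. By the discussion preceding the statement, a class $[(A,B)] \in \fD(\F_2, \SU(n,1))$ determines, via the construction of \secref{canp}, a canonical orbit in $\mathcal{OL}_n$. A choice of reference representative $\mathfrak p$ of this orbit then singles out a specific ordered $(2n+2)$-tuple $[\mathfrak p] \in \M(n, 2n+2)$, consisting of the two fixed points of $A$, the two fixed points of $B$, and the $n-1$ marked chain points attached to each of the two eigenframes (giving $2(n+1) = 2n+2$ points). The angular invariant $\A(\mathfrak p)$ and the reference cross ratios $\X_{2j}(\mathfrak p)$, $\X_{3j}(\mathfrak p)$, $\X_{kj}(\mathfrak p)$ of Section~\ref{cr} are exactly the Cunha--Gusevskii invariants of this tuple.

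Next, suppose that two loxodromic pairs $(A,B)$ and $(A',B')$ have matching trace invariants $\tr(A^i) = \tr({A'}^i)$, $\tr(B^i) = \tr({B'}^i)$ for $1 \leq i \leq \lfloor (n+1)/2 \rfloor$, a common angular invariant, and identical reference cross ratios. Let $\mathfrak p$ and $\mathfrak p'$ be their reference tuples in $\M(n,2n+2)$. Applying Theorem~\ref{cugu} yields a $C \in \SU(n,1)$ with $C \cdot \mathfrak p = \mathfrak p'$, which means that the canonical orbits of $(A,B)$ and $(A',B')$ in $\mathcal{OL}_n$ coincide. Combined with the equality of trace invariants, Theorem~\ref{thm2} then forces $(A,B)$ and $(A',B')$ to be $\SU(n,1)$-conjugate, which is exactly the conclusion of Theorem~\ref{mt2}.

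The only point that requires any checking is that the reference representative genuinely lies in $\M(n, 2n+2)$, that is, that the $2n+2$ boundary points comprising it are pairwise distinct. The four fixed points are distinct because the pair is loxodromic and by the no-common-fixed-point hypothesis built into $\fD(\F_2,\SU(n,1))$; the marked chain points are distinct from the fixed points and from each other by the construction in \secref{canp}, which builds them as projections of chains spanned by pairwise independent polar eigenvectors. Once this is recorded, the argument is purely formal: the genuine mathematical content sits in Theorem~\ref{thm2} (to be proved in \secref{ntol}) and in Theorem~\ref{cugu} (the Cunha--Gusevskii theorem); Theorem~\ref{mt2} should be read as their quantitative combination, replacing the orbit datum by an explicit finite list of numbers that separate conjugacy classes of loxodromic pairs.
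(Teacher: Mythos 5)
There is a fundamental mismatch here: the statement you were asked to prove is Theorem~\ref{cugu} itself --- the Cunha--Gusevskii result that an ordered $m$-tuple of distinct points of $\partial\ch^n$ is determined up to the action of ${\rm SU}(n,1)$ by its angular invariant $\A(p)$ and the cross ratios $\X_{2j}(p)$, $\X_{3j}(p)$, $\X_{kj}(p)$ --- but your proposal never proves it. Instead you \emph{assume} Theorem~\ref{cugu} (you explicitly call it ``the engine performing this translation'') and use it, together with Theorem~\ref{thm2}, to derive Theorem~\ref{mt2}. That derivation is essentially the deduction the paper itself makes in \secref{tol}, but as a proof of the statement in question it is circular: you are invoking the very result you are supposed to establish. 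Note also that Theorem~\ref{cugu} is a statement purely about configurations of boundary points; it makes no reference to loxodromic elements, traces, eigenframes, or canonical orbits, so an argument whose entire content is about loxodromic pairs cannot be addressing it.

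A genuine proof has to work at the level of point configurations. The route taken in the cited source \cite{cugu1} (the paper itself only quotes the result) is via Gram matrices: choose lifts $\z_1,\ldots,\z_m$ of the $z_i$, form the Hermitian Gram matrix $G=(\langle \z_i,\z_j\rangle)$, and observe that changing lifts replaces $G$ by $\bar D\, G\, D$ for a nonsingular diagonal $D$, while the listed invariants $\A$, $\X_{2j}$, $\X_{3j}$, $\X_{kj}$ are precisely a complete set of functions on $G$ invariant under this rescaling. One then shows that two tuples whose invariants agree admit lifts with \emph{equal} Gram matrices, and that two families of null vectors with the same Gram matrix (whose span carries a form of the correct signature) are carried one to the other by a unitary of the ambient form, which can be extended to an element of ${\rm SU}(n,1)$ acting on all of $\C^{n,1}$. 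None of these steps --- reconstruction of the Gram matrix from the invariants, normalization of lifts, and the extension of the partial isometry --- appears in your proposal, so the statement remains unproved.
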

Cunha and Gusevskii further constructed moduli space $\M(n, m)$ of $\PU(n,1)$-configuration of ordered $m$-tuples of boundary points as a subspace of $\R^{m(m-3)+1}$. When $t<m$, we shall view $\M(n, t)$ as a subspace of $\R^{m(m-3)+1}$ embedded by the canonical inclusion map of the boundary points:
$$(p_1, \ldots, p_t) \mapsto (p_1, \ldots, p_t, 0, \ldots, 0).$$
\section{Loxodromic elements in $\SU(n,1)$}\label{lox}
The following facts are standard.
\begin{lemma}\label{lile2}
 Let $A=\begin{bmatrix} Ae_1 & \ldots & Ae_{n+1} \end{bmatrix} \in {\rm SU }(n,1)$.  For $2 \leq i\leq n$, the vector $Ae_i$ is uniquely determined by the vectors $Ae_1,\ldots,{\hat Ae_i},\ldots, Ae_{n+1}$. It is the vector orthogonal to the subspace spanned by $Ae_1,\ldots,{\hat Ae_i},\ldots, Ae_{n+1}$. 
\end{lemma}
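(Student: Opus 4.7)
My plan is to pin down $Ae_i$ in two stages: first show that it lies in a one-dimensional subspace determined by the remaining columns, and then use the structural constraints of $\SU(n,1)$ to single out the specific vector on that line.

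The key observation is that the standard basis is pairwise $H$-orthogonal at the index $i$ for $2\le i\le n$: a direct computation with $He_i = e_i$ gives $\langle e_i, e_j\rangle = \delta_{ij}$ for $2\le j\le n$, while $\langle e_i, e_1\rangle = \langle e_i, e_{n+1}\rangle = 0$ since $He_1 = e_{n+1}$ and $He_{n+1} = e_1$. Because $A\in\SU(n,1)$ preserves the Hermitian form, these equalities transport to $\langle Ae_i, Ae_j\rangle = 0$ for every $j\ne i$. Hence $Ae_i \in W^\perp$, where $W$ is the subspace spanned by the remaining columns. Invertibility of $A$ forces $W$ to be $n$-dimensional, and non-degeneracy of $H$ then forces $W^\perp$ to be one-dimensional, so the direction of $Ae_i$ is determined by the other columns alone.

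To pin down the scalar, I would invoke the two remaining $\SU(n,1)$ constraints: $\langle Ae_i, Ae_i\rangle = \langle e_i, e_i\rangle = 1$ reduces the ambiguity to a unit-modulus multiple, and $\det A = 1$ picks out the unique such multiple. Concretely, the orthogonal direction can be realized by a Hermitian analogue of the Hodge star, writing the vector orthogonal to $Ae_1,\ldots,\widehat{Ae_i},\ldots,Ae_{n+1}$ as a cofactor-type expression whose overall phase is then fixed by the determinant condition.

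The main obstacle is really the subtlety of the statement itself: orthogonality alone never singles out a vector uniquely, so the uniqueness clause in the lemma must implicitly use the remaining $\SU(n,1)$ data (unit spacelike norm of $Ae_i$ together with the determinant normalization). Once this reading is adopted, the proof reduces to standard linear algebra, with non-degeneracy of $H$ producing the one-dimensional orthogonal complement and the determinant normalization fixing the scalar.
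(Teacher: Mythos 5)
Your proof is correct. The paper offers no argument here (it labels the lemma a standard fact), and your proposal supplies exactly the reasoning the paper is implicitly relying on: form-preservation gives $\langle Ae_i,Ae_j\rangle=\langle e_i,e_j\rangle=0$ for $j\neq i$, so $Ae_i$ lies in the one-dimensional $H$-orthogonal complement of the span of the remaining columns, and then $\langle Ae_i,Ae_i\rangle=1$ together with $\det A=1$ removes the residual unit-modulus ambiguity. You are also right to flag that the lemma's phrase ``the vector orthogonal to the subspace'' is literally underdetermined and must be read as incorporating the normalization and determinant constraints; this is precisely the reading under which Corollary~\ref{licor1} (two elements of $\SU(n,1)$ agreeing in all columns but the $i$-th must agree in the $i$-th) follows.
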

\begin{corollary}\label{licor1}
Let $A=\begin{bmatrix} Ae_1 & \ldots & Ae_{n+1}\end{bmatrix} ,~B=\begin{bmatrix}Be_1 & \ldots & Be_{n+1}\end{bmatrix} \in {\rm SU }(n,1)$ and $C\in {\rm SU }(n,1)$ be such that $CAe_j=Be_j$ for $j\neq i, 1\leq j\leq n+1$, then $CAe_i=Be_i.$
\end{corollary}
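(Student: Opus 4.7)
The plan is to reduce the corollary directly to Lemma~\ref{lile2} via the elementary observation that $CA \in \SU(n,1)$, since $\SU(n,1)$ is closed under multiplication. Consequently both $CA$ and $B$ are matrices in $\SU(n,1)$, whose column vectors form $H$-``orthonormal'' frames, and the hypothesis says these two frames agree in every column except possibly the $i$-th. It therefore suffices to prove that the $i$-th column of any matrix in $\SU(n,1)$ is uniquely determined by the remaining $n$ columns.

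For $2 \leq i \leq n$ this is exactly the content of Lemma~\ref{lile2}. I would apply the lemma to $CA$ and to $B$ separately: each identifies its $i$-th column as the unique vector orthogonal to the span of the remaining columns, with the normalization forced by $\SU(n,1)$-membership. Since those spans coincide by hypothesis, this forces $CAe_i = Be_i$.

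For the endpoint indices $i \in \{1,\, n+1\}$ a short separate check is needed, since $Ae_1$ and $Ae_{n+1}$ are null rather than unit vectors and so the hypotheses of Lemma~\ref{lile2} do not apply verbatim. The $H$-orthogonal complement of $\operatorname{span}\{Ae_2, \ldots, Ae_n\}$ is a $2$-dimensional subspace $W \subset \C^{n,1}$ of signature $(1,1)$, whose null cone consists of exactly two complex lines; one is spanned by the known null column (either $Ae_{n+1}$ or $Ae_1$), so the missing column is forced onto the other null line, with its scalar pinned down by the pairing relation $\langle Ae_1, Ae_{n+1}\rangle = 1$. Running the identical argument on $B$ with the same inputs then yields $CAe_i = Be_i$ in these two remaining cases as well.

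I do not anticipate any real obstacle: the entire statement is essentially a direct corollary of Lemma~\ref{lile2}, and the only nontrivial step is the brief $(1,1)$-signature count at the two null columns, which is a standard consequence of the nondegeneracy of $H$.
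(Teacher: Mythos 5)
Your reduction of the middle cases $2\le i\le n$ to Lemma~\ref{lile2} is correct and is surely the intended argument (the paper labels both statements ``standard'' and supplies no proof): $CA\in\SU(n,1)$, the columns of $CA$ and $B$ agree except in position $i$, and the lemma identifies the $i$-th column from the remaining ones. One small gloss: $H$-orthogonality to the other columns pins down $CAe_i$ only up to a unit scalar; the phase is then fixed by $\det(CA)=\det(B)=1$ via multilinearity of the determinant in the $i$-th column. This is presumably what the lemma's ``uniquely determined'' is packaging, so citing it is legitimate.

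The endpoint analysis, however, contains a genuine error, and in fact the statement is false for $i\in\{1,n+1\}$, so the corollary must be read with the same restriction $2\le i\le n$ as the lemma (which is the only case the paper ever uses, e.g.\ in Lemma~\ref{lem:normalization}). The mistake is the claim that the null cone of a signature-$(1,1)$ Hermitian form on $\C^2$ consists of exactly two complex lines: that is true for a real quadratic form on $\R^2$, but the Hermitian null cone $\{v:\langle v,v\rangle=0\}$ in $\C^{1,1}$ is a real hypersurface containing a circle's worth of complex lines. Concretely, if $W=\mathrm{span}\{u,v\}$ with $\langle u,u\rangle=\langle v,v\rangle=0$ and $\langle v,u\rangle=1$, then every vector $itu+v$ with $t\in\R$ is null and pairs to $1$ with $u$, so your list of conditions does not determine the missing column. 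Nor is this a repairable gap in the proof: the matrix $A'=I+it\,e_{n+1}e_1^{T}$ lies in $\SU(n,1)$ (its columns have the same Gram matrix $H$ as those of the identity, and $\det A'=1$), agrees with $I$ in columns $2,\dots,n+1$, yet $A'e_1=e_1+ite_{n+1}\neq e_1$; taking $A=A'$ and $B=C=I$ violates the conclusion for $i=1$, and the transpose construction does the same for $i=n+1$.
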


\subsection{Loxodromics in $\SU(n,1)$}\label{sec:lox}

Let $A\in {\rm SU }(n,1)$ be  loxodromic.  Then $A$ has eigenvalues
 of the form $re^{i\theta},e^{i\phi_1},\ldots,~e^{i\phi_{n-1}},re^{i\theta}$, where $\theta, \phi_i \in (-\pi, \pi]$ for $1\leq i\leq n-1$, satisfying
 $2\theta+\phi_1+\dots+\phi_{n-1} \equiv 0 \pmod{2 \pi}$.  Let $a_A\in\partial\ch^n$ be the attractive fixed point of $A$, then any lift ${\bf a}_A$ of $a_A$ to $\V_0$ is an eigenvector of $A$ and corresponds to the 
eigenvalue  $re^{i\theta}$. Similarly,  if $r_A\in\partial\ch^n$ is the repelling fixed point of $A$, then any lift $\bf{r}_A$ of $r_A$ to $\V_0$ is an
eigenvector of $A$ with eigenvalue $r^{-1}e^{i\theta}$. For $r>1$, $\theta, \phi_i \in (-\pi, \pi]$ for $1\leq i\leq n-1$, define $E_A(r,\theta,\phi_1,\ldots,\phi_{n-1})$ as\\
\begin{equation}\label{li1}
 E_A(r,\theta,\phi_1,\ldots,\phi_{n-1})=\left[\begin{array}{ccccc}
                         re^{i\theta} & ~ &~ & ~& ~\\
                         ~ & e^{i\phi_1} & ~ & ~ & ~\\
                         ~ & ~ & \ddots & ~ & ~ \\ 
                         ~ & ~ &  ~ & e^{i\phi_{n-1}} & ~\\
                         ~ & ~ & ~ & ~ & r^{-1}e^{i\theta}\\
                        \end{array}\right].\\
\end{equation}
For $1\leq i\leq n-1$, let $\textbf{x}_{i,A}$ be an eigenvector corresponding to the eigenvalue $e^{i\phi_i}$ scaled so that 
$\langle \textbf{x}_{i,A},\textbf{x}_{i,A} \rangle=1$. Let $C_A=\left[\begin{array}{ccccc}
 \textbf{a}_A & \textbf{x}_{1,A} & \ldots & \textbf{x}_{n-1,A} & \textbf{r}_A\\
  \end{array}\right]$ be the $(n+1)\times (n+1)$ matrix, where
the lifts are chosen so that the eigenvectors form an orthonormal set, i.e.
$$\langle \a_A,\r_A \rangle=\langle \x_{i,A},\x_{i,A}\rangle =1, ~\langle \x_{i, A}, \x_{j, A}\rangle=0, ~i \neq j.$$
Then $C_A\in {\rm SU }(n,1)$ and $A=C_{A}E_{A}(r,\theta,\phi_1,\ldots, \phi_{n-1})C_{A}^{-1}$, where
$E_{A}(r,\theta,\phi_1,\ldots, \phi_{n-1})$ is given by \eqnref{li1}.
\begin{lemma} \label{lile1}
 Let $A \in \SU(n,1)$. Then $A$ has characteristic polynomial 
$$\chi_A(X)=\sum_{i=0}^{n+1}(-1)^{i}s_ix^{n+1-i},$$ where $s_0=s_{n+1}=1$ and $s_i={\bar s_{n+1-i}}$.
\end{lemma}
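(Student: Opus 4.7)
The plan is to derive the three identities by exploiting the two defining conditions of $\SU(n,1)$: unit determinant and preservation of the Hermitian form $H$. The identities $s_0=1$ (leading coefficient of a monic polynomial) and $s_{n+1}=\det A = 1$ (since $A\in\SU(n,1)$) are immediate. The content of the lemma is the symmetry $s_i=\bar s_{n+1-i}$, and for this I would compute the characteristic polynomial of $A^{-1}$ in two different ways and compare.

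First, from $A^{\ast}HA=H$ I would read off $A^{-1}=H^{-1}A^{\ast}H$, so $A^{-1}$ and $A^{\ast}$ are conjugate and share the same characteristic polynomial. Thus
\[
\chi_{A^{-1}}(X) \;=\; \chi_{A^{\ast}}(X) \;=\; \det(XI-A^{\ast}) \;=\; \overline{\det(\bar X I - A)} \;=\; \overline{\chi_A(\bar X)} \;=\; \sum_{i=0}^{n+1}(-1)^{i}\,\overline{s_i}\,X^{n+1-i}.
\]
On the other hand, letting $\lambda_1,\ldots,\lambda_{n+1}$ be the eigenvalues of $A$, the eigenvalues of $A^{-1}$ are $1/\lambda_j$, and the standard reciprocal identity for elementary symmetric functions combined with $\det A=s_{n+1}=1$ gives the $j$-th elementary symmetric function of $\{1/\lambda_j\}$ as $s_{n+1-j}$; re-indexing yields
\[
\chi_{A^{-1}}(X)\;=\;\sum_{j=0}^{n+1}(-1)^{j}\,s_{n+1-j}\,X^{n+1-j}.
\]
Comparing coefficients of $X^{n+1-i}$ in the two displayed expressions produces $s_{n+1-i}=\overline{s_i}$, which is exactly the asserted symmetry.

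There is no serious conceptual obstacle here; the only thing to be careful about is the sign and index bookkeeping when passing from $\chi_A$ to $\chi_{A^{-1}}$ via the reciprocal-root substitution, and when taking complex conjugates inside $\det(\,\cdot\,)$. A clean way to avoid sign errors in the latter step is to use the identity $\det(XI-A^{\ast}) = \overline{\det(\bar X I - A)}$, valid because conjugate-transpose preserves determinants up to complex conjugation of the entries.
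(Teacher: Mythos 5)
Your proof is correct and follows essentially the same route as the paper: the paper's one-line argument invokes the invariance of the eigenvalue set under inversion in the unit circle $\lambda\mapsto 1/\bar\lambda$ together with $\det A=1$, and your conjugacy $A^{-1}=H^{-1}A^{\ast}H$ is exactly the matrix-level source of that invariance, with the coefficient comparison spelled out. No gaps; your version simply supplies the bookkeeping the paper leaves implicit.
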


\begin{proof}
 Let $\Lambda=\{\lambda_1,\ldots, \lambda_{n+1} \}$ be set of eigenvalues of $A$. Using the fact that $\Lambda$ is invariant under inversion in 
 unit circle and $\text{det}(A)=1$, we have the result. 
\end{proof}

\begin{prop}\label{lipr1} 
 Two loxodromic elements in ${\rm SU}(n,1)$ are conjugate if and only if they have the same eigenvalues.
\end{prop}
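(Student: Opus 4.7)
The plan is to leverage the canonical form $A = C_A E_A(r,\theta,\phi_1,\ldots,\phi_{n-1}) C_A^{-1}$ developed in the discussion immediately preceding the proposition. The forward direction is immediate: conjugate matrices share a characteristic polynomial, hence the same eigenvalues.

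For the converse, given loxodromic $A, B \in \SU(n,1)$ with identical eigenvalue multisets $\{re^{i\theta}, e^{i\phi_1}, \ldots, e^{i\phi_{n-1}}, r^{-1}e^{i\theta}\}$, I would construct orthonormal eigenframes $C_A, C_B \in \SU(n,1)$ such that $A = C_A E C_A^{-1}$ and $B = C_B E C_B^{-1}$ for the \emph{common} diagonal matrix $E = E_A(r,\theta,\phi_1,\ldots,\phi_{n-1})$. Because the eigenvalue multisets coincide, we are free to order them identically along the diagonal. Then $P := C_B C_A^{-1} \in \SU(n,1)$ visibly satisfies $PAP^{-1} = C_B E C_B^{-1} = B$, which is the desired conjugacy.

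The subtle part is verifying that such orthonormal eigenframes exist in $\SU(n,1)$, not merely in $\U(n,1)$. This relies on three observations. First, if $Av = \lambda v$ and $Aw = \mu w$, then preservation of the Hermitian form gives $\lambda \bar\mu \langle v, w \rangle = \langle v, w \rangle$, so eigenvectors of distinct eigenvalues are $\langle\cdot,\cdot\rangle$-orthogonal unless $\lambda \bar\mu = 1$. Among the loxodromic eigenvalues, this exceptional relation holds only for the pair $(re^{i\theta}, r^{-1}e^{i\theta})$; this is exactly why $\langle \a_A, \r_A\rangle$ need not vanish and can instead be rescaled to $1$. Second, the $e^{i\phi_i}$-eigenvectors lie in the orthogonal complement of $\mathrm{span}(\a_A, \r_A)$, on which the form is positive definite, so Gram--Schmidt produces an orthonormal basis even when some $\phi_i$ repeat. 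Third, once the frame is built inside $\U(n,1)$, its determinant $e^{i\psi}$ can be normalized to $1$ by multiplying a single $\x_{i,A}$ by the unit scalar $e^{-i\psi}$, which preserves both the unit-norm condition and the orthogonality relations.

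The main (mild) obstacle is thus the bookkeeping for repeated $\phi_i$'s and the $\SU$-versus-$\U$ determinant adjustment; no deeper difficulty arises, and once the two decompositions $A = C_A E C_A^{-1}$, $B = C_B E C_B^{-1}$ with $C_A, C_B \in \SU(n,1)$ are in hand, the conjugator $P = C_B C_A^{-1}$ finishes the argument in one line.
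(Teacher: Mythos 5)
Your proof is correct and follows exactly the route the paper intends: the paper states this proposition without proof, but the normalized eigenframe $C_A\in\SU(n,1)$ with $A=C_AE_A(r,\theta,\phi_1,\ldots,\phi_{n-1})C_A^{-1}$ constructed in the preceding paragraph is precisely the canonical form your argument completes. Your three verifications (orthogonality from $\lambda\bar\mu\neq 1$, Gram--Schmidt on the positive-definite complement for repeated $\phi_i$, and the determinant adjustment by a unit scalar) correctly fill in the details the paper leaves implicit.
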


\begin{corollary}\label{licor3}
 Let $A$ and $A'$ are two loxodromic elements in ${\rm {\rm SU }}(n,1)$ such that ${\text tr}(A^k)={\text tr}(A'^k)$ for $1\leq k\leq \lfloor (n+1)/2 \rfloor$ and $a_{A}=a_{A'},~
r_{A}=r_{A'}$ and $x_{i,A}=x_{i,A'}$, where $i$ ranges over $n-2$ numbers in $\{1, \ldots, n-1\}$. Then $A=A'$.  
\end{corollary}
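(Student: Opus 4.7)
The plan is to diagonalize both matrices and show that the hypotheses force the diagonal part and the unitary frame of eigenvectors to coincide. In the notation of \secref{sec:lox}, write $A = C_A E_A C_A^{-1}$ and $A' = C_{A'} E_{A'} C_{A'}^{-1}$; it then suffices to establish $E_A = E_{A'}$ and, for a compatible choice of lifts, $C_A = C_{A'}$, since then $A = A'$ is immediate.

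The spectrum is recovered from the trace data. By Newton's identities the power sums $\tr(A^k)$ for $1 \le k \le \lfloor (n+1)/2 \rfloor$ determine $s_1,\dots,s_{\lfloor (n+1)/2 \rfloor}$, and together with $s_0 = s_{n+1} = 1$ and the Hermitian symmetry $s_i = \overline{s_{n+1-i}}$ supplied by \lemref{lile1}, the full characteristic polynomial is pinned down. So $A$ and $A'$ share the multiset of eigenvalues. Loxodromicity singles out the unique pair of non-unimodular eigenvalues $re^{i\theta}$ and $r^{-1}e^{i\theta}$, so the parameters $r$ and $\theta$ are common to $A$ and $A'$, and these eigenvalues correspond to the shared attractive and repelling fixed points $a_A = a_{A'}$ and $r_A = r_{A'}$. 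The unit-modulus multisets $\{e^{i\phi_j}\}$ and $\{e^{i\phi'_j}\}$ agree as well; each of the $n-2$ shared eigenlines $x_{i,A} = x_{i,A'}$ with $i \in I$ matches its corresponding diagonal entry under the natural labelling, and deleting these entries from both multisets forces $e^{i\phi_k} = e^{i\phi'_k}$ for the unique remaining index $k$. Hence $E_A = E_{A'}$.

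For the frame, I would choose lifts compatibly. The normalization $\langle \aa, \ra \rangle = 1$ leaves only one scaling freedom, which I use to set $\mathbf{a}_{A'} = \aa$; the same normalization then forces $\mathbf{r}_{A'} = \ra$. For each $i \in I$, the unit-norm constraint on $\mathbf{x}_{i,A'}$ leaves a $\U(1)$ phase that I rotate to arrange $\mathbf{x}_{i,A'} = \mathbf{x}_{i,A}$. At this point $C_A$ and $C_{A'}$ are elements of $\SU(n,1)$ agreeing in $n$ of their $n+1$ columns, and \corref{licor1} applied with the intertwiner taken to be the identity forces the remaining column to agree. Thus $C_A = C_{A'}$, and combined with $E_A = E_{A'}$ this yields $A = A'$.

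The subtle point I foresee is aligning the indexing of the elliptic eigenvalues on the two sides so that the shared eigenlines $x_{i,A} = x_{i,A'}$ really do correspond to a common entry of the diagonal form; once this alignment is fixed, the Hermitian symmetry of the characteristic polynomial transports agreement from the given $n-2$ indices to the missing one, and the rest is routine bookkeeping with $\SU(n,1)$-orthonormal frames through \corref{licor1}.
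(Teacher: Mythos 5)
The paper states this corollary without proof, and your argument is exactly the intended one: \lemref{lile1} together with Newton's identities recovers the full characteristic polynomial and hence the eigenvalue multiset, the hypotheses supply $n$ of the $n+1$ columns of a common eigenframe (after the phase normalizations you describe), and \lemref{lile2} via \corref{licor1} supplies the last, so $A$ and $A'$ agree on a basis with matching eigenvalues. The labelling caveat you flag at the end is not mere bookkeeping but is genuinely needed for the statement to be true: under the weaker reading in which $x_{i,A}=x_{i,A'}$ asserts only a shared eigenline, the claim fails (for $n\geq 3$ take $A'$ obtained from $A$ by permuting two distinct unimodular eigenvalues over the same orthonormal eigenframe; this preserves all traces, both fixed points and every eigenline, yet $A\neq A'$), so the hypothesis must be read, as you assume, as matching eigenvectors attached to the same entry of a common ordering of the spectrum.
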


Since every co-efficient of $\chi_A(x)$ can be expressed as a polynomial in $\tr(A)$, $\tr(A^2), \ldots$,   $\tr(A^{n-1})$, an immediate consequence of Lemma \ref{lile1} and \propref{lipr1} is the following. 
\begin{corollary}\label{cor:conjugacy}
 Two loxodromic elements $A$ and $A^\prime$ in ${\rm SU }(n,1)$ are conjugate if and only if ${\text tr}(A^k)={\text tr}(A')^k$ for $1\leq k\leq \lfloor (n+1)/2 \rfloor$.\end{corollary}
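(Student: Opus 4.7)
The plan is to reduce the statement to an equality of characteristic polynomials using \propref{lipr1}, and then show that the traces $\tr(A^k)$ for $1\leq k \leq \lfloor (n+1)/2 \rfloor$ determine every coefficient of $\chi_A$. The forward direction is trivial: traces are conjugation invariants, so conjugate loxodromic elements share all powers' traces. The substance lies entirely in the converse.

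For the converse, I write $\chi_A(x)=\sum_{i=0}^{n+1}(-1)^i s_i x^{n+1-i}$ as in \lemref{lile1}. Since $s_i$ is the $i$-th elementary symmetric polynomial in the eigenvalues of $A$ and $\tr(A^k)$ is the $k$-th power sum, the Newton--Girard identities allow each $s_i$ to be written as a polynomial (with rational coefficients) in the power sums $\tr(A), \tr(A^2), \ldots, \tr(A^i)$. Consequently, knowledge of $\tr(A^k)$ for $1 \leq k \leq \lfloor (n+1)/2 \rfloor$ determines $s_1, s_2, \ldots, s_{\lfloor (n+1)/2 \rfloor}$.

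To recover the remaining coefficients, I invoke the symmetry $s_i=\bar s_{n+1-i}$ from \lemref{lile1}. A quick count shows the book-keeping is tight: if $n+1=2m$, the independent data is $s_1,\ldots,s_m$, giving exactly $m=\lfloor(n+1)/2\rfloor$ coefficients; if $n+1=2m+1$, the independent data is $s_1,\ldots,s_m$, giving $m=\lfloor(n+1)/2\rfloor$ coefficients (with the middle coefficient in the odd case being forced to be real by the symmetry relation, but this is not needed here). Together with $s_0=s_{n+1}=1$ and the conjugation symmetry, the first $\lfloor(n+1)/2\rfloor$ coefficients determine all of $\chi_A$.

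Thus, if $\tr(A^k)=\tr(A'^k)$ for $1\leq k\leq \lfloor (n+1)/2\rfloor$, then $\chi_A=\chi_{A'}$, so $A$ and $A'$ have the same eigenvalues, and by \propref{lipr1} they are conjugate in $\SU(n,1)$. The main obstacle, if any, is just confirming that the count of independent coefficients matches $\lfloor(n+1)/2\rfloor$ in both parities of $n$; once this is noted, the proof is a one-line application of Newton's identities combined with \lemref{lile1} and \propref{lipr1}.
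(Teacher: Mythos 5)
Your proposal is correct and follows the same route as the paper, which proves this corollary in one sentence by noting that the coefficients of $\chi_A$ are polynomials in the power traces and then invoking \lemref{lile1} and \propref{lipr1}. You simply spell out the Newton--Girard step and the coefficient count coming from $s_i=\bar s_{n+1-i}$, both of which check out in each parity of $n$.
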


\begin{remark}
 In \lemref{lile1}, $s_{(n+1)/2}$ is real, for $n$ odd. In this case, we need $(n-1)/2$ complex parameters $s_1,\ldots,s_{(n-1)/2}$ and
  one real parameter $s_{(n+1)/2}$. When $n$ is even, we need $n/2$ complex parameters $s_1,\ldots, s_{n/2}$. In each case, we need $n$ 
  real parameters to describe a loxodromic element up to conjugacy.
\end{remark}

Note that a loxodromic element is \emph{regular} if $\chi_A(x)$ has mutually distinct roots. In other words, a loxodromic element $A$ is regular if and only if it has exactly $n+1$ fixed points on $\C \P^n$.  The following result is a part of \cite[Theorem 3.1]{gpp}. 
\begin{lemma}  \label{do}
Let $R(\tau)$ denote the resultant of the characteristic polynomial $\chi_A(x)$ and its first derivative $\chi_A'(x)$. Then $A$ is regular loxodromic in $\SU(n,1)$ if and only if $R(\tau)<0$.  
\end{lemma}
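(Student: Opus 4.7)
The plan is to connect $R$ to the discriminant of $\chi_A$ and then exploit the involution $\sigma\colon\lambda\mapsto\bar\lambda^{-1}$ on the spectrum of any $A\in\SU(n,1)$. Writing $N=(n+1)n/2$ and $V=\prod_{i<j}(\lambda_i-\lambda_j)$ for the Vandermonde of the eigenvalues, I would first invoke the standard resultant/discriminant identity to write $R=(-1)^{N}D$ with $D=V^{2}$. Since $D=0$ iff $\chi_A$ has a repeated root, this already shows $R\ne 0$ iff the eigenvalues of $A$ are pairwise distinct; in particular, for loxodromic $A$, $R\ne 0$ is equivalent to regularity.

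The sign of $R$ will be pinned down by a single symmetry identity. Using $\sigma(\lambda_i)-\sigma(\lambda_j)=-\overline{(\lambda_i-\lambda_j)}/\overline{\lambda_i\lambda_j}$ and $\prod_{i<j}\lambda_i\lambda_j=(\det A)^{n}=1$, I would establish
\[
\prod_{i<j}\bigl(\sigma(\lambda_i)-\sigma(\lambda_j)\bigr)=(-1)^{N}\overline{V}.
\]
Since $\sigma$ also permutes the eigenvalues of $A$, the same product equals $\mathrm{sgn}(\sigma)\cdot V$, which gives the master identity
\[
\mathrm{sgn}(\sigma)\cdot V=(-1)^{N}\overline{V}. \qquad(\star)
\]

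For loxodromic $A$, $\sigma$ swaps the unique off-circle pair $\{re^{i\theta},r^{-1}e^{i\theta}\}$ and fixes every $e^{i\phi_k}$, so $\sigma$ is a single transposition on the spectrum and $\mathrm{sgn}(\sigma)=-1$. Identity $(\star)$ then forces $V$ to lie on either the real or the imaginary axis, according to the parity of $N$, and in either parity I would read off $R=(-1)^{N}V^{2}=-|V|^{2}\le 0$, strictly negative exactly when $V\ne 0$, i.e.\ when $A$ is regular. For the converse, if $R<0$ then the eigenvalues of $A$ are distinct, so $A$ is semisimple; since the signature $(n,1)$ permits at most one pair of off-circle eigenvalues, such an $A$ is either elliptic or loxodromic. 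In the elliptic case $\sigma$ fixes every eigenvalue, so $\mathrm{sgn}(\sigma)=+1$, and $(\star)$ yields $R=|V|^{2}\ge 0$, contradicting $R<0$; hence $A$ must be regular loxodromic.

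The main technical step is the sign bookkeeping that produces $(\star)$, which requires carefully tracking the $\binom{n+1}{2}$ sign flips from $\bar\lambda_i^{-1}-\bar\lambda_j^{-1}$ and invoking $\det A=1$ to eliminate the factor $\prod_{i<j}\lambda_i\lambda_j$. Once $(\star)$ is in hand, the parity casework decouples the loxodromic from the elliptic regime and forces the sign of $R$ in both.
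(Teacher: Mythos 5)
Your argument is correct, but note that the paper itself gives no proof of this lemma: it is quoted verbatim as part of Theorem~3.1 of \cite{gpp} (Gongopadhyay--Parker--Parsad), so there is no in-paper argument to compare against. What you have written is a legitimate self-contained derivation, and it is essentially the argument one expects behind the cited result: the identity $R=(-1)^{(n+1)n/2}\prod_{i<j}(\lambda_i-\lambda_j)^2$ for the monic degree-$(n+1)$ polynomial $\chi_A$, combined with the fact that the spectrum of any $A\in\SU(n,1)$ is invariant under $\lambda\mapsto\bar\lambda^{-1}$ and that $\prod_{i<j}\lambda_i\lambda_j=(\det A)^n=1$, which yields your identity $(\star)$ and pins the sign of $R$ to the parity of the permutation induced by the inversion. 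Your case analysis checks out in both parities: for a regular loxodromic the inversion is a single transposition and $R=-|V|^2<0$; for a regular elliptic it is the identity and $R=|V|^2\geq 0$; a repeated eigenvalue (in particular any non-semisimple, e.g.\ parabolic, element) gives $R=0$. Two small points worth making explicit in a write-up: $\mathrm{sgn}(\sigma)$ is only well defined when the eigenvalues are distinct, which is harmless since otherwise $V=0$ and $R=0$; and the claim that a semisimple element of $\SU(n,1)$ admits at most one eigenvalue pair off the unit circle should be justified by observing that two such pairs would produce a two-dimensional totally isotropic subspace, impossible in signature $(n,1)$. With those remarks the proof is complete.
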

In the above,  the variable $\tau$ is given by the traces of elements: 
$$\tau=(\tr(A), \tr^2(A), \ldots, \tr^{\lfloor (n+1)/2 \rfloor}(A)).$$
As a corollary, we have:
\begin{corollary}\label{cl1}
A regular loxodromic element of $\SU(n,1)$  is completely determined by its fixed-point set $\mathfrak p(A)$ on $\C \P^n$ and its image in  the domain of traces:
$$\mathcal T=R(\tr)^{-1} (-\infty, 0)=\{\tau \in \C^{\lfloor (n+1)/2 \rfloor} \ | \ R(\tau)<0\}.$$
\end{corollary}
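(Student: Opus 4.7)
The plan is to combine \corref{cor:conjugacy} with a short rigidity argument that exploits the pairwise distinctness of the $n+1$ eigenvalues of a regular loxodromic element.

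First I would recover the characteristic polynomial $\chi_A$, and hence the unordered multiset of eigenvalues $\{\lambda_1,\ldots,\lambda_{n+1}\}$, from the trace datum $\tau\in\mathcal T$. By \lemref{lile1} one has $s_0=s_{n+1}=1$ and $s_i=\overline{s_{n+1-i}}$, so it suffices to determine $s_1,\ldots,s_{\lfloor (n+1)/2\rfloor}$; by Newton's identities these coefficients are polynomials in $\tr(A),\tr(A^2),\ldots,\tr(A^{\lfloor (n+1)/2\rfloor})$. By \lemref{do}, the hypothesis $R(\tau)<0$ guarantees that $\chi_A$ has $n+1$ distinct roots, so all eigenvalues are recovered from $\tau$.

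Next, given two regular loxodromic elements $A,A'\in\SU(n,1)$ with the same image in $\mathcal T$ and with $\mathfrak p(A')=\mathfrak p(A)$, I would show that $A=A'$. \corref{cor:conjugacy} provides some $g\in\SU(n,1)$ with $A'=gAg^{-1}$. Since the fixed-point set of $A'$ on $\C\P^n$ equals $g\cdot\mathfrak p(A)$, the hypothesis forces $g$ to permute the $n+1$ eigenlines of $A$ according to some permutation $\sigma$. Fix eigenvector lifts $v_i$ with $Av_i=\lambda_iv_i$; then $gv_i$ spans the eigenline labelled by $\lambda_{\sigma(i)}$, so $A(gv_i)=\lambda_{\sigma(i)}gv_i$, while $A(gv_i)=A'(gv_i)=gAv_i=\lambda_ig v_i$. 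Comparing these gives $\lambda_{\sigma(i)}=\lambda_i$, and distinctness of the eigenvalues forces $\sigma=\mathrm{id}$. Therefore $g$ preserves every eigenline of $A$, is diagonal in the eigenbasis, and commutes with $A$, so $A'=gAg^{-1}=A$.

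The only subtlety is the matching $\lambda_{\sigma(i)}=\lambda_i$: one has to reconcile the fact that $g$ acts projectively on $\mathfrak p(A)\subset\C\P^n$ with the linear-level identity $A'=gAg^{-1}$. This is benign, because $A$ acts by a well-defined scalar on each of its eigenlines, so evaluating $Agv_i$ via either description yields the same scalar and the comparison goes through. I do not anticipate any other obstacle; the corollary is essentially a repackaging of \corref{cor:conjugacy} plus the rigidity forced by having $n+1$ distinct eigenvalues.
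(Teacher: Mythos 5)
The reduction of $\tau$ to the $n+1$ distinct eigenvalues (via \lemref{lile1}, Newton's identities and \lemref{do}) is fine, but your rigidity argument has a genuine gap, and it sits exactly at the step you flagged as ``benign''. From $A'=gAg^{-1}$ and $[gv_i]=[v_{\sigma(i)}]$ you are entitled to two conclusions: $gv_i$ is a $\lambda_{\sigma(i)}$-eigenvector of $A$, and $gv_i$ is a $\lambda_i$-eigenvector of $A'$. These are statements about two \emph{different} operators and do not collide. The middle equality $A(gv_i)=A'(gv_i)$ in your chain is not a matter of reconciling projective with linear actions; it is precisely the assertion that $A$ and $A'$ agree on the line $[v_{\sigma(i)}]$, i.e.\ the conclusion you are trying to reach. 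Nothing in your hypotheses, as you have read them, forces $\sigma=\mathrm{id}$.

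In fact, with $\mathfrak p(A)$ read as an \emph{unordered} set the statement you are proving is false. Take $A=\mathrm{diag}(re^{i\theta},e^{i\phi_1},e^{i\phi_2},r^{-1}e^{i\theta})$ and $A''=\mathrm{diag}(re^{i\theta},e^{i\phi_2},e^{i\phi_1},r^{-1}e^{i\theta})$ in $\SU(3,1)$ with $\phi_1\neq\phi_2$: both are regular loxodromic, both preserve $H$ and have determinant $1$, they have the same traces of all powers and the same set of fixed points in $\C\P^n$, yet $A\neq A''$ (a similar example in any dimension swaps the attracting and repelling eigenvalues). So no proof of the unordered version can exist, and your detour through \corref{cor:conjugacy} and the conjugating element $g$ cannot be repaired. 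The corollary is true only when $\mathfrak p(A)$ is understood as the \emph{ordered} tuple $(a_A,r_A,[\x_{1,A}],\ldots,[\x_{n-1,A}])$ matched to a fixed ordering of the roots of $\chi_A$, which is how the paper uses it in the correspondence $A\mapsto(\mathfrak p(A),\tau(A))\in\partial\ch^n\times\partial\ch^n\times\P(\V_+)\times\cdots\times\P(\V_+)\times\mathcal T$ displayed immediately afterwards; with that reading the proof is the one-line observation that a diagonalizable operator is determined by its eigenvalues together with the corresponding eigenlines, and neither $g$ nor the permutation $\sigma$ is needed.
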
 
In particular, this provides a well-defined correspondence of the set ${\mathcal  R}_{lox}$ of regular loxodromics in $\SU(n,1)$  onto $\partial \ch^n \times \partial \ch^n \times \underbrace{\P(\V_+) \times\cdots\times\P(\V_{+})}_{(n-1)\text{ times}}\times \mathcal T$:
$$A \mapsto (\mathfrak p(A), \tau(A)).$$
 This correspondence will be used later. It has been shown in \cite{gpp} that this correspondence is actually a smooth embedding  when $n=3$. 

\section{Eigenpoints of a loxodromic Element}\label{canp}
\subsection{Eigenpoints of a loxodromic}
Let $A$ be a loxodromic element in $\SU(n,1)$. Let $\a_A$, $\r_A$, $\x_{1, A}, \ldots, \x_{n-1, A}$ be a set of eigenvectors of $A$ chosen so that for $1 \leq i \leq n-1$,
\begin{equation} \label{n1'}
\langle \a_A,\r_A \rangle=\langle \x_{i,A},\x_{i,A}\rangle =1, ~\langle \x_{i, A}, \x_{j, A}\rangle=0, ~i \neq j.
\end{equation}
Such a choice of eigenvectors will be called a set of \emph{orthonormal eigenframe} of $A$. 

\medskip We choose a orthonormal frame of the form \eqnref{n1'} of a loxodromic element $A$. Define a set of $n+1$ boundary points associated to $A$ as follows:
\begin{equation}\label{b1}
\p_{1,A}=\a_A,~ \p_{2, A}=\r_A,~\p_{i,A}=(\a_A-\r_A)/\sqrt{2}+\x_{i-2,A}, ~3\leq i\leq n+1.
\end{equation}
We call the point $p_A=(p_{1, A}, \ldots, p_{n+1, A})$  \emph{an eigenpoint} of $A$. 

\medskip Essentially, we choose a point $p$ from the complex hyperbolic line $\langle v, v \rangle=-1$ in the projection of $\C^{1,1}$ spanned by $\{ \a_A, \r_A\}$, and then, the eigenpoints are chosen from the 1-chain that is spanned by $\p$ and $\x_{i, A}$.  So, the association of eigenpoints to $A$ depends on the choice of the projective image of an eigenframe. 

\subsubsection{Eigenspace decomposition of a loxodromic element}\label{espa}  Suppose $A$ is a loxodromic element in $\SU(n,1)$. Suppose $A$ has eigenvalues $re^{i \theta}$, $r^{-1} e^{i \theta}$, $r>1$, and $e^{ i \theta_1}, \ldots, e^{i \theta_k}$, with multiplicities $m_1, \ldots, m_k$ respectively. Then $\C^{n,1}$ has the following orthogonal decomposition  into eigenspaces:
$$\C^{n,1}=\L_A \oplus \V_{\theta_1} \oplus \ldots \oplus \V_{\theta_k},$$
here $\L_A$ is the $(1,1)$ subspace of $\C^{n,1}$ spanned by $\a_A$, $\r_A$. In the projective space, this means $A$ fixes disjoint copies of $\C\P^{m_i-1}$, for $i=1, \ldots, k$.  Thus, an orthonormal eigenframe of $A$ is determined up to  an action of $\U(m_i)$ on each $\V_{\theta_i}$.  We call $(m_1, \ldots, m_k)$ the \emph{multiplicity} of $A$. It is clear that equality of multiplicities is a necessary condition for two loxodromics to be conjugate. 

Note that the change of eigenframes amounts to a transformation of the following form that maps one frame to the other without changing the loxodromic $A$:
$$M=\begin{bmatrix} \lambda  & 0 & & \ldots&  \\ 0 & U_1 & 0 & 0 \ldots & 0\\& & 
\ddots & & \\0 & 0 & 0 &  U_k & 0 \\ 0 & 0 & 0 & 0 & \bar \lambda^{-1} \end{bmatrix},$$
where $U_i \in \U(m_i)$. 
This action is equivalent to the action of the centralizer $Z(A)$ on the eigenframes. Equivalently, this amount to conjugation of $A$ by an element of $Z(A)$.  This associates a unique $Z(A)$-orbit of eigenpoints to a  loxodromic element.

 Moreover we have the following.

\subsubsection{Congruent eigenpoints determine equivalence of eigenframes }
\begin{lemma}\label{lem:normalization}
 Let $A,~A^{'}$ be loxodromic elements in $ {\rm SU }(n,1)$ with  chosen eigenframes. Let $(p_{1, A}, \ldots, p_{n+1,A})$ and $(p_{1, A'}, \ldots, p_{n+1, A'})$ be eigenpoints of $A$ and $A'$ respectively.   Suppose that there exists $C\in {\rm SU}(n,1)$ such that $C(p_{i,A})=p_{i,A'}$,  $1\leq i \leq n$. Then $C(x_{j-2,A})=x_{j-2,A'}$ for 
  $3\leq j \leq n+1$.
\end{lemma}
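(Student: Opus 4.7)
The plan is to translate the projective hypothesis into vector equations in $\C^{n,1}$, to pin down the auxiliary scalars that arise by using the $\SU(n,1)$-invariance of the Hermitian form, and finally to recover the missing eigenvector $\x_{n-1,A}$ by an orthogonality argument in the spirit of \lemref{lile2}.

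First I will rewrite each projective equality $C(p_{i,A})=p_{i,A'}$ as $C\p_{i,A}=\mu_i\,\p_{i,A'}$ for some $\mu_i\in\C^{\ast}$, using the specific lifts prescribed by (\ref{b1}). The cases $i=1,2$ give $C\a_A=\mu_1\a_{A'}$ and $C\r_A=\mu_2\r_{A'}$; applying the invariance of $\langle\cdot,\cdot\rangle$ to $\langle\a_A,\r_A\rangle=1=\langle\a_{A'},\r_{A'}\rangle$ then yields the relation $\mu_1\bar\mu_2=1$.

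Next, for each $i$ with $3\leq i\leq n$, I substitute these identities into $C\p_{i,A}=\mu_i\p_{i,A'}$ and expand via (\ref{b1}) to obtain
\begin{equation*}
C\x_{i-2,A}=\frac{\mu_i-\mu_1}{\sqrt{2}}\,\a_{A'}+\frac{\mu_2-\mu_i}{\sqrt{2}}\,\r_{A'}+\mu_i\,\x_{i-2,A'}.
\end{equation*}
Pairing both sides against $\r_{A'}$, and observing that $\langle C\x_{i-2,A},\r_{A'}\rangle=\bar\mu_2^{-1}\langle\x_{i-2,A},\r_A\rangle=0$ by invariance of the form and the eigenframe orthogonalities (\ref{n1'}), forces the $\a_{A'}$-coefficient to vanish, i.e.\ $\mu_i=\mu_1$. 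Pairing similarly against $\a_{A'}$ forces $\mu_i=\mu_2$. Combined with $\mu_1\bar\mu_2=1$, I conclude $\mu_1=\mu_2=\mu_i=e^{i\phi}$ for some $\phi\in\R$, and therefore $C\x_{k,A}=e^{i\phi}\x_{k,A'}$ for every $1\leq k\leq n-2$, giving $C(x_{k,A})=x_{k,A'}$ projectively in these cases.

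The only remaining case is $k=n-1$, for which no $p_i$-datum is given. I will handle it by expanding $C\x_{n-1,A}$ in the orthonormal eigenframe $\{\a_{A'},\x_{1,A'},\ldots,\x_{n-1,A'},\r_{A'}\}$ of $A'$; the orthogonalities of $\x_{n-1,A}$ with $\a_A$, $\r_A$, and with each $\x_{k,A}$ for $1\leq k\leq n-2$, combined with the $C$-images already determined and the invariance of $\langle\cdot,\cdot\rangle$, kill every coefficient in the expansion except that of $\x_{n-1,A'}$, and the normalization $\langle\x_{n-1,A},\x_{n-1,A}\rangle=1$ forces the surviving coefficient to have unit modulus. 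This is the content of \lemref{lile2} / \corref{licor1} transported to the projective setting. The main technical hurdle is the middle step: showing that the $\a_{A'}$- and $\r_{A'}$-components in the formula for $C\x_{i-2,A}$ really do vanish, which is the mechanism that upgrades the projective hypothesis on $p_i$ into genuine linear information about the $x$-eigenvectors, and it crucially uses that the $\x_{k,A}$ lie in the orthogonal complement of the $(1,1)$-subspace $\L_A$ spanned by the fixed vectors.
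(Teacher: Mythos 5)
Your argument is correct and follows essentially the same route as the paper's proof: lift the projective equalities to $C\p_{i,A}=\mu_i\p_{i,A'}$, use the $\SU(n,1)$-invariance of the Hermitian form on pairings with the fixed vectors to force all the scalars to coincide and be unimodular, and then recover $\x_{n-1,A}$ via the orthogonality argument of \lemref{lile2}/\corref{licor1}. The only cosmetic difference is that the paper extracts the scalar relations from the products $\langle \p_{1,A},\p_{i,A}\rangle$ and $\langle \p_{2,A},\p_{i,A}\rangle$ directly, whereas you pair the expanded $C\x_{i-2,A}$ against $\a_{A'}$ and $\r_{A'}$; these computations are equivalent.
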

\begin{proof}
Let $C(\p_{i,A})=\alpha_{i}\p_{i,A'}$ for $1\leq i\leq n$. Observe that $\langle \p_{1,A},\p_{i,A} \rangle=-1/\sqrt{2}=\langle \p_{1,A'},\p_{i,A'} \rangle$
   and $\langle \p_{2,A},\p_{i,A} \rangle=1/\sqrt{2}=\langle \p_{2,A'},\p_{i,A'} \rangle \text{ for }3\leq i\leq n$. Since $C\in \SU(n,1)$ preserve the form $\langle .,. \rangle$, we
    have $$ (-1/\sqrt{2})\alpha_1{\bar \alpha_i}=-1/\sqrt{2} \text{ and }(1/{\sqrt{2}})\alpha_{2}{\bar \alpha_i}=1/{\sqrt{2}} \text{ for } 3\leq i \leq n.$$
This implies $ \alpha_{i}={ \bar \alpha_1}^{-1}={\bar \alpha_{2}}^{-1}$ for $ 3\leq i \leq n$.  Using $ \langle C(\p_{1, A}), C(\p_{2, A})\rangle =1$ implies, $\alpha_1=\bar \alpha_2^{-1}$. Hence we must have $|\alpha_1|=1$, i.e. $\alpha_1=\bar{\alpha_1}^{-1}$. Hence 
$$C((\a_A-\r_A)/\sqrt{2}+\x_{i-2,A})={\bar \alpha_{1}}^{-1}((\a_{A'}-\r_{A'})/\sqrt{2}+\x_{i-2,A'}),$$
yields $C(x_{i-2,A})=x_{i-2,A'}$ for 
  $3\leq i \leq n$. By \corref{licor1} this implies $C(x_{n-1, A})=x_{n-1, A'}$. 
  \end{proof}

\section{Loxodromic Pairs and Reference Eigenframes}\label{tol}Throughout this paper,  given a loxodromic pair $(A, B)$ in $\SU(n,1)$, it will always be assumed that $A$ and $B$ have disjoint fixed point sets. 

\medskip Given $(A, B)$, fix a pair of associated orthonormal frames $\mathcal B=(\mathcal B_A, \mathcal B_B)$ so that
 $$\langle \a_A,\r_A \rangle=\langle \a_B,\r_B\rangle=\langle \a_A,\a_B \rangle=\langle \x_{i,A},\x_{i,A}\rangle =\langle \x_{i,B},\x_{i,B}\rangle= 1.$$
Such a normalized pair of eigenframes will be called an \emph{eigenframe} of $(A, B)$. 

\medskip  We choose an ordering of $\mathcal B$ as follows:
$$\mathcal B=(\a_A, \r_A, \a_B, \r_B,  \x_{1, A}, \ldots, \x_{n-1, A},  \x_{1, B}, \ldots, \x_{n-1, B}).$$
Such an ordering will be called a \emph{canonical ordering}. This gives a tuple of boundary points
$$\mathfrak p=(\p_1, \p_2, \q_1, \q_2, \p_3,\ldots, \p_{n+1},  \q_3, \ldots, \q_{n+1}),$$
where $\p_i$, $\q_i$ are defined by \eqnref{b1}. Note that not all $\p_i$, $\q_i$ may not be distinct. If they are not, say $\p_i=\q_j$, then we replace $\x_{i, A}$ by $\lambda \x_{i, A}$ for some $\lambda \in \s^1$ and choose a different $\q_j$ from the chain spanned by $\frac{\a_A-\r_A}{\sqrt 2}$ and $\x_{i, A}$.  The resulting ordered tuple of distinct points $(p_1, \ldots, p_{2n+2})$ will be called a \emph{reference  ordered tuple of eigenpoints} of $(A, B)$, or simply as \emph{reference eigenpoint} of $(A, B)$. 

\medskip Given $[(A, B)] \in \fD(\F_2, \SU(n,1))$, we choose a  reference eigenpoint $\mathfrak p$ of $(A, B)$ as above using \eqnref{b1}. After fixing such a choice,  the assignment of a reference eigenpoint to a pair $(A, B)$ is well-defined up to the diagonal action of $Z(A) \times Z(B)$ on the eigenframes. Since the diagonal subgroup of $Z(A) \times Z(B)$ (if nontrivial) is a subgroup of $\SU(n,1)$, this associates the point $[\mathfrak p]$ on the space $\M(n, 2n+2)$ to the class $[(A, B)]$. We call this point $[\mathfrak p]$ as the \emph{reference orbit} of $(A, B)$. 

 Now, we define projective invariants following Cunha and Gusevskii \cite{cugu1}.  
\begin{definition}\label{cid}
Let $(A, B)$ be a pair of loxodromics in $\SU(n,1)$.  We fix the canonical  ordering of $\mathcal B$. To a reference eigenpoint $(p_1, \ldots, p_{2n+2})$ of $(A, B)$, we associate the following conjugacy invariants:

\medskip We associate complex numbers $\X_{2j}(A, B)$, $\X_{3j}(A, B)$, $\X_{kj}(A,B)$  given by the following:  
$$\X_{2j}(A, B)={\X(p_1,~p_2,~p_3,~p_j)}, ~  ~{\X}_{3j}(A, B)=\X(p_1,~p_3,~p_2,~p_j),$$ 
$$\X_{kj}(A,B)={\X(p_1,~p_k,~p_2,~p_j)},  $$
where $4 \leq j \leq 2n+2, ~~4 \leq k \leq 2n+2, ~~k < j$.

\medskip The invariants defined above are called \emph{reference cross ratios} of the pair $(A, B)$. It is easy to see that there are $ (n+1)(2n-1)$ non-zero cross ratios in the above list. 

\medskip Finally define the \emph{angular invariant} of $(A, B)$: 
$$\A(A, B)=\A(p_1, p_2, p_3).$$
\end{definition}

The \thmref{mt2} now follows from  \thmref{cugu} and the following.
\begin{theorem}\label{mt1}
Let $(A, B)$ be a loxodromic pair in $\SU(n,1)$. Then the $\SU(n, 1)$ conjugation orbit of $(A, B)$ is determined  by  the traces ${ \tr}({A^i})$, ${\tr}( B^i)$,  $1\leq i \leq \lfloor (n+1)/2 \rfloor$, and its reference orbit on $\M(n, 2n+2)$. 
\end{theorem}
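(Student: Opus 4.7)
The plan is direct: extract a single conjugator $C\in\SU(n,1)$ from the tuple data and verify that it simultaneously conjugates $A$ to $A'$ and $B$ to $B'$ by combining the eigenframe matching of \lemref{lem:normalization} with the eigenvalue equality encoded in the traces.

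Let $(A,B)$ and $(A',B')$ be two loxodromic pairs satisfying the hypotheses. I would fix eigenframes for both pairs and form their reference eigenpoints $\mathfrak p=(p_1,\dots,p_{2n+2})$ and $\mathfrak p'=(p_1',\dots,p_{2n+2}')$ as in \eqref{b1}. Since the reference orbits of $(A,B)$ and $(A',B')$ coincide as points of $\M(n,2n+2)$, after possibly replacing the chosen eigenframes by ones in the same $Z(A)\times Z(B)$-orbit we may assume that the angular invariant and all cross ratios $\X_{2j},\X_{3j},\X_{kj}$ of $\mathfrak p$ agree with those of $\mathfrak p'$. \thmref{cugu} then produces a single $C\in\SU(n,1)$ with $C(p_i)=p_i'$ for every $1\le i\le 2n+2$.

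Next, I would partition $\mathfrak p$ according to the generator each eigenpoint comes from. Positions $1,2,5,\dots,n+3$ of $\mathfrak p$ form an eigenpoint $(p_{1,A},\dots,p_{n+1,A})$ of $A$ in the sense of \secref{canp}, and $C$ carries this subtuple to the corresponding subtuple for $A'$. \lemref{lem:normalization} now applies (its hypothesis is satisfied for all $n+1$ indices, hence certainly for any chosen $n$ of them), so $C$ sends each normalized eigenvector $\x_{j,A}$ projectively to $\x_{j,A'}$ while mapping $\a_A,\r_A$ to unimodular scalar multiples of $\a_{A'},\r_{A'}$. The identical argument applied to the $B$-subtuple (positions $3,4,n+4,\dots,2n+2$) yields the corresponding statement for the eigenframe of $B$, with the \emph{same} global conjugator $C$.

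Finally, the trace hypothesis together with \corref{cor:conjugacy} forces $A$ and $A'$ to share the same multiset of eigenvalues, and similarly for $B$ and $B'$. Since $C$ carries an eigenframe of $A$ projectively onto an eigenframe of $A'$ and the unimodular scalar phases cancel upon conjugation, a direct computation shows that $CAC^{-1}$ agrees with $A'$ on each of the eigenvectors $\a_{A'},\r_{A'},\x_{j,A'}$ with the correct eigenvalue, so $CAC^{-1}=A'$; the same argument yields $CBC^{-1}=B'$. The main obstacle I anticipate is bookkeeping the $\s^1$-phase freedom inherent in lifting boundary points to $\C^{n,1}$: the chain construction in \eqref{b1} rigidifies this freedom so that a single common phase acts uniformly on the entire $A$-eigenframe, which is exactly the centralizer ambiguity already quotiented out in the definition of the reference orbit, and hence is irrelevant to the conjugation question.
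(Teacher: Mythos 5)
Your proposal is correct and follows essentially the same route as the paper: both extract a single conjugator $C$ from the coincidence of reference orbits, upgrade the matching of boundary tuples to a matching of eigenframes via \lemref{lem:normalization}, and then invoke the trace hypothesis (the paper via \corref{licor3}, you via \corref{cor:conjugacy} plus the eigenvector computation that corollary encapsulates) to conclude $CAC^{-1}=A'$ and $CBC^{-1}=B'$. Your write-up is in fact slightly more explicit than the paper's about where \thmref{cugu} enters to produce $C$ and about the unimodular phase bookkeeping, but there is no substantive difference in method.
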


 \begin{proof}Suppose $(A, B)$ and $(A', B')$ are loxodromic pairs with the same traces and the same reference orbit. Following the notation in \secref{sec:lox}, $A=C_{A}E_{A}C_{A}^{-1},~B=C_{B}E_{B}C_{B}^{-1}$ and similarly for $A'$ and $B'$. Since $(A, B)$ and $(A', B')$ defines the same reference orbit, it follows from \lemref{lem:normalization}  that there exists a $C$ in $\SU(n,1)$ such that $C(a_{A})=a_{A'},C(x_{k,A})=x_{k,A'},~C(r_{A})=r_{A'}$,  and, 
 $C(a_{B})=a_{B'},C(x_{k,B})=x_{k,B'},~C(r_{B})=r_{B'}$ for $1\leq k \leq n-1$. Therefore $CAC^{-1}$
 and $A'$ have same eigenvectors. Since $\tr(A')^i=\tr(CAC^{-1})^i\text{ for } 1\leq i\leq \lfloor (n+1)/2 \rfloor$, by \corref{licor3}, we must have $CAC^{-1}=A'$.
 Similarly, $B'=CBC^{-1}$. This completes the proof.   
\end{proof}

\begin{remark}
In view of \lemref{lem:normalization}, it follows from the  above theorem that $\fD(\F_2, \SU(n,1))$ has a projection into  
$$\C^{\lfloor (n+1)/2\rfloor} \times \C^{\lfloor (n+1)/2\rfloor} \times \M(n, 2n+2).$$\end{remark}

\section{ Classification of Loxodromic Pairs} \label{ntol}
Note that in the previous section, the assignment of each class $[(A, B)]$ to a tuple of boundary points depends on the choice of $\mathcal B=(\mathcal B_A, \mathcal B_B)$ and after fixing such a choice, it is independent up to the diagonal action of $Z(A) \times Z(B)$. As an advantage of the construction in the previous section, we could associate numerical conjugacy invariants to  pairs. However, the choice of eigenframes of $(A, B)$ is unique up to an action of the full group $Z(A) \times Z(B)$. If we want to get an explicit description of the moduli space independent of the choice of $\mathcal B$,  we need to consider the unique assignment of the $Z(A) \times Z(B)$-orbit of tuples of boundary points to the pair $(A, B)$. We attempt this in this section. However, we do not know how to associate numerical conjugacy invariants in this approach.

\subsection{Moduli of normalized boundary points} Consider the set  $\mathcal E$ of ordered tuples of boundary and polar points on $(\partial \ch^n)^4 \times \P(\V_+)^{2n-2}$ given by a pair of orthonormal frames $(F_1, F_2)$:  
$$\mathfrak p=(q_1, q_2, r_1, r_2, \ldots, r_{n-1}, q_{n+1}, q_{n+2}, r_{n+3}, \ldots, r_{2n+2}).$$
 This corresponds to pair of orthonormal frames of $\C^{n,1}$: 
$$\hat {\mathfrak p}=(\q_1, \q_2, \r_1, \r_2, \ldots, \r_{n-1}, \q_{n+1}, \q_{n+2}, \r_{n+3}, \ldots, \r_{2n-1}),$$
where $\{ \q_1, \q_2\} \cap \{ \q_{n+1}, \q_{n+2}\} =\phi$,  $\langle \q_i, \q_i \rangle=0=\langle \q_{n+i}, \q_{n+i} \rangle$ for  $i=1, 2$, 
$\langle \r_j, \r_j\rangle=\langle \r_{n+j}, \r_{n+j} \rangle=1$ for all $j=1, \ldots, n-1$,
$\langle \q_1, \q_2 \rangle=\langle \q_{n+1}, \q_{n+2} \rangle=\langle \q_1, \q_{n+2} \rangle=1$, 
$\langle \q_i, \r_j \rangle=0=\langle \q_{n+i}, \r_{n+j} \rangle$,  for $i=1, 2$,  $j=1, \ldots, n-1$. 

\medskip To each such point, we have an ordered tuples of boundary points, not necessarily distinct,  $(p_1, \ldots, p_{2n+2})$ satisfying the conditions:
\begin{equation}\label{eq1} 
\langle \p_1, \p_2 \rangle= \langle \p_{n+2}, \p_{n+3}\rangle=\langle \p_1, \p_{n+2}\rangle=1,\end{equation}
\begin{equation}\label{eq12}
\langle \p_i, \p_j \rangle=-1=\langle \p_{n+i}, \p_{n+j} \rangle, i \neq j, ~i, j=3, \ldots, n-1;
\end{equation}
\begin{equation} \label{eq2} \langle \p_1, \p_i \rangle=-{1}/{\sqrt 2}=\langle \p_{n+2}, \p_{n+i}\rangle,~ i=3, \ldots, n-1;\end{equation}
\begin{equation} \label{eq3}  \langle \p_2, \p_k \rangle={1}/{\sqrt 2}=\langle \p_{n+2}, \p_{n+k} \rangle, ~k=3, \ldots, n-1,\end{equation}
here $\p_i$ denotes the standard lift of $p_i$ for each $i$.  Note that not all of $\p_i, \p_{n+i}$, $i=3, \ldots, n$ may be distinct. If they are not distinct, we relabel them and write them as a ordered tuple of distinct boundary points $\hat {\mathfrak p}=(p_1, p_2, \ldots, p_t)$, $n+3 \leq t \leq 2n+2$, so that they correspond to the original ordering of $\mathfrak p$. 

\medskip Let $\mathcal L_t$ be the section of $\M(n, t)$ defined by the equations \eqnref{eq1}--\eqnref{eq3}, and the ordering as describe above. Let $\mathcal L=\bigcup_{t=n+3}^{ 2n+2} \mathcal L_t$. This space can be viewed as a subspace of $\R^{(2n+2)(2n-1)+1}$, where the embedding of $\M(n,t)$ into the affine space  is defined by the inclusion map:
$$(x_1, \ldots, x_t) \mapsto (x_1, \ldots, x_t, 0, \ldots, 0).$$
Thus it has the the induced topology.

\subsection{Pairs of Loxodromics} 
Let $(A, B)$ be a loxodromic pair in $\SU(n,1)$ with multiplicities $(a_1, \ldots, a_k, b_1, \ldots, b_l)$. 
As in the previous section, we now consider the ordered canonical eigenframe to $(A, B)$ given by the tuple
$$\mathfrak e=(\a_A,  \r_A, \x_{1, A}, \ldots, \x_{n-1, A}, \a_B, \r_B, \x_{1, B}, \ldots, \x_{n-1, B}).$$
with normalization as follows:
\begin{equation} \label{n1}
\langle \a_A,\r_A \rangle=1=\langle \a_B,\r_B \rangle ,~ ~\langle \x_{i, A}, \x_{j, A}\rangle=0=\langle \x_{i, B}, \x_{j, B}\rangle, ~i \neq j; 
\end{equation}
\begin{equation} \label{n2}
\langle \x_{i,A},\x_{i,A}\rangle =1=\langle \x_{i,B},\x_{i,B}\rangle; ~~\langle \r_A,\a_B \rangle=1.
\end{equation}

This defines a point on  $(\partial \ch^n)^4 \times (\P(\V_+))^{2n-2}$ that we refer as \emph{canonical eigenpoint}. We further assign a canonical ordered tuple of boundary points $(\mathcal B_A, \mathcal B_B)$ to $\mathfrak e$ (with canonical ordering) defined by \eqnref{b1} as done in the previous section: 
\begin{equation}\label{ep} \p=(a_A, r_A,  q_{1, A}, \ldots, q_{n-1, A}, a_B, r_B, q_{1, B}, \ldots, q_{n-1, B}). \end{equation}
  Note that in this case,  $\mathfrak e$, and hence $(p_1, \ldots, p_{2n+2})$ is determined by $(A, B)$ up to a right action of the group 
$$G=\C^{\ast} \times \U(a_1) \times \ldots \times \U(a_k) \times \U(b_1) \times \ldots \times \U(b_l)$$
on $\p$ given by the following:  for $g=(\lambda, A_1, \ldots, A_k, B_1, \ldots, B_l)\in G$, 
$$g. \p=\bigg(\lambda\a_A ,  \bar{\lambda}^{-1} \r_A , Y_1, \ldots, Y_k ,\lambda \a_B, {\bar \lambda}^{-1} \r_B, Z_1, \ldots, \ldots, Z_l ),$$
where, $Y_i=(\x_{t_i,A}, \ldots,\x_{t_i+a_i-1, A}) A_i$, 
 $Z_j=(\x_{s_j+1, B}, \ldots, \x_{s_j+b_j-1, B}) B_j$,  $t_i=\sum_{p=1}^{i} a_{p-1}$, $s_j=\sum_{p=1}^{j} b_{p-1}$, $a_0=b_0=1$. 

  The group $G$ represents the group $Z(A) \times Z(B)$ and it acts by the above action on the set consisting of loxodromic pairs with multiplicities $(a_1, \ldots, a_k, b_1, \ldots, b_l)$. So, to each such loxodromic pair  $(A, B)$, we have a  $G$-orbit of canonical tuples of boundary points. Since the action of $Z(A)$ and $Z(B)$ in this case is not necessarily the diagonal action, the orbit may move over  $\M(n, t)$, $n+3 \leq t \leq 2n+2$, and it defines a point on $\mathcal L$.

\medskip  The above action of $G$ on $\mathfrak p$ induces an action of $G$ on $\mathcal L$, and gives a $G$-orbit of $[\mathfrak p]$ in $\mathcal L$. This $G$-orbit $[\mathfrak p]$ on $\mathcal L$ corresponds uniquely to the conjugacy class of $(A, B)$, we call it \emph{canonical orbit} of $(A, B)$.  The orbit space on $\mathcal L$ under the above $G$-action is denoted by $\mathcal{OL}_n(a_1, \ldots, a_k; b_1, \ldots, b_l)$,  or simply, $\mathcal{OL}_n$ when there is no ambiguity on $(a_1, \ldots, a_k, b_1, \ldots, b_l)$. Each point on $\mathcal{OL}_n$ corresponds to a conjugacy class of a loxodromic pair $(A, B)$ with multiplicity $(a_1, \ldots, a_k, b_1, \ldots, b_l)$. When both $A$ and $B$ are regular, i.e. have distinct eigenvalues, we denote this orbit space as $\mathcal {RL}_n$.  In this case, by \lemref{lem:normalization},  $\mathcal{RL}_n$ can be canonically realized as a subspace of $\R^{2n(2n-3)+1}$.  Now we have proof of \thmref{thm2}.
\subsection{Proof of \thmref{thm2}}

\begin{proof}Suppose $(A, B)$ and $(A', B')$ are loxodromic pairs with the same type \\  $(a_1, \ldots, a_k, b_1, \ldots, b_l)$, same traces and the same canonical orbit.
Following the notation in \secref{sec:lox}, $A=C_{A}D_{A}C_{A}^{-1},~B=C_{B}D_{B}C_{B}^{-1}$ and similarly for $A'$ and $B'$. In this case, $C_A$ is an element in the subgroup 
$\U(1,1) \times \U(a_1) \times \ldots \times \U(a_k)$:
$$C_A=\begin{bmatrix} \a_A & E_1 &  & \ldots E_k & \r_A \end{bmatrix},$$
where $E_i=\begin{bmatrix} x_{t_i, A} & \ldots & \x_{t_i +a_i -1, A} \end{bmatrix}$, $D_A$ is the diagonal matrix
$$D_A=\begin{bmatrix} re^{i \theta} & 0 & & \ldots&  \\ 0 & \lambda_1 I_{a_1} & 0 & 0 \ldots & 0\\& & 
\ddots & & \\0 & 0 & 0 & \lambda_k I_{a_k} & 0 \\ 0 & 0 & 0 & 0 & r^{-1} e^{i \theta} \end{bmatrix},$$
here $I_s$ denote identity matrix of rank $s$. Similarly for $C_B$. 

 Since the canonical orbits are equal, by \lemref{lem:normalization} it follows that there exist $C\in{\rm{{\rm SU }}}(n,1)$ such that $C(a_{A})=a_{A'},~C(r_{A})=r_{A'}, ~C(a_{B})=a_{B'}, C(r_B)=r_B$, and for $1 \leq i \leq k$, $ 1\leq j \leq l$, 
$$C(\x_{t_i, A}, \ldots, \x_{t_i+a_i-1, A})= (\x_{t_i, A'}, \ldots, \x_{t_i+a_i-1, A'}) U_i,$$
$$C(\x_{t_j, B}, \ldots, \x_{t_j+b_j-1, B})= (\x_{t_j, B'}, \ldots, \x_{t_j+a_i-1, B'}) V_j,$$
where $U_i \in \U(a_i)$, $V_j \in \U(b_j)$. Let 
$$M=\begin{bmatrix} \lambda & 0 & & \ldots&  \\ 0 & U_1 & 0 & 0 \ldots & 0\\& & 
\ddots & & \\0 & 0 & 0 &  U_k & 0 \\ 0 & 0 & 0 & 0 & \bar \lambda^{-1}  \end{bmatrix},$$
Therefore $$CAC^{-1}=[C(C_A)] D_A [C(C_A)]^{-1}=C_A' MD_A M^{-1} C_A'^{-1}.$$
Observing that $M$ commutes with $D_A$ and $D_A=D_{A'}$, we conclude $CAC^{-1}=A'$. 
 Similarly, $B'=CBC^{-1}$. This completes the proof.   
\end{proof}

\begin{remark}
Let $\fR(\F_2, \SU(n,1))$ be the subset of $\fD(\F_2, \SU(n,1))$ consisting of regular pairs. By \corref{cl1}  it follows that $\fR(\F_2, \SU(n,1))$ is embedded in the topological space $\mathcal T \times \mathcal T \times  \mathcal{RL}_{n}$.  The space $\fD(\F_2, \SU(n,1))$ is embedded in $\C^{\lfloor (n+1)/2\rfloor} \times \C^{\lfloor (n+1)/2\rfloor} \times \mathcal{OL}_n$.
\end{remark} 

\section{Examples of Good Pairs}\label{good}
In this section, we construct two classes of pairs for whom, a suitable chosen normalization of eigenframes reduces the $Z(A) \times Z(B)$ action to a single orbit on some $\mathcal L_t$. Thus, these classes of pairs can be parametrized canonically by conjugacy invariants. 

\subsection{Good Pairs I}
Let $A$ and $B$ be two loxodromic elements without a common fixed point. Consider the loxodromic pair $(A, B)$ that has the following property:  for every positive eigenvector $\y$ of $B$, there exists a positive eigenvector $\x$ of $A$  such that $\langle \x, \y \rangle\neq 0$. We shall further assume $(A, B)$ is regular, i.e. both $A$ and $B$ are regular.    We denote the set of all such pairs in $\fD(\F_2, \SU(n,1))$ as $\mathcal T_1$.

\medskip Let $(A, B)$ represents an element in $\mathcal T_1$.  In this case, we choose a pair of eigenframes $\mathcal B=(\mathcal B_A, \mathcal B_B)$,  normalized and arranged so that for $i=1, \ldots, n-1$, 
 $$\langle \a_A,\r_A \rangle=\langle \a_B,\r_B\rangle=\langle \a_A,\r_B \rangle=1, ~\langle \x_{i,A},\x_{i,B}\rangle =1=\langle \x_{i, A}, \x_{i, A} \rangle.$$
Choose a canonical ordering on $\mathcal B$: 
$$\mathcal B=(\a_A, \r_A, \a_B, \r_B,  \x_{1, A}, \ldots, \x_{n-1, A},  \x_{1, B}, \ldots, \x_{n-1, B}).$$
 This gives a tuple of boundary points as before: 
$$\mathfrak p=(\p_1, \p_2, \q_1, \q_2, \p_3,\ldots, \p_{n+1},  \q_3, \ldots, \q_{n+1}),$$
where $\p_i$, $\q_i$, $i=3, \ldots, n+1$,  are defined by \eqnref{b1}. Note that  $\p_i$, $\q_j$ might not be distinct for $i$, $j$. If they are not, we relabel them and re-arrange according to the canonical ordering of $\mathcal B$.

 In a chosen eigenframe of $(A, B)$, normalized as above,  suppose we change $\a_A$ by $\lambda \a_A$, $\lambda \in \C^{\ast}$. Then $\langle\a_A , \a_B\rangle = 1$, resp. $\langle \a_A, \r_A\rangle=1$,  implies that $\a_B$, resp. $\r_A$,  is scaled by ${\bar \lambda}^{-1}$. The relation $\langle\a_B , \r_B \rangle = 1$ implies $\r_B$ is scaled by $\lambda$.  If we change $\x_{i, A}$ by $\mu_i$, then $\x_{i, B}$ is changed by $\bar \mu_{i}^{-1}$.   This implies that  $\mathcal B$, and hence $(p_1, \ldots, p_t)$ is determined  up to  an action of the group $\T=\C^{\ast} \times \U(1)^{n-1}$ on the set of canonical eigenframes. This action is given by the following:  for $g=(\lambda,\mu_1, \ldots, \mu_{n-1})\in \T$, 
\begin{equation}\label{ga1} g. \p=\bigg(\lambda\a_A , \bar{\lambda}^{-1}\r_A , \lambda\a_B , \r_B \bar{\lambda}^{-1}, \mu_1\x_{1, A} , \ldots,  {\mu_{n-1}}\x_{n-1, A},  \mu_1\x_{1, B}, \ldots, {\mu_{n-1}}\x_{n-1, B}  \bigg).\end{equation} 
 Hence to each $(A, B),$ we assign a unique $\T$-tuple of boundary points $(p_1, \ldots, p_t)$. Since $\T$ projects to a subgroup of $\SU(n,1)$, this gives an assignment of the $\SU(n,1)$-conjugation orbit of $(A, B)$ to a unique orbit $[(p_1, \ldots, p_t)]$ in $\mathcal  L_t$. In this case, the number of numerical conjugacy invariants defined as in \defref{cid} depends on $t$.  
\subsection{Good Pairs II} Now we define another class of pairs that generalizes the generic elements we classified in \cite{gp2}. Following the notion in \cite{gp2}, we will call them as `non-singular' here. In the following $L_A$ is the $(1,1)$-subspace as given in \secref{espa}. 

 \begin{definition}\label{ns}
A pair of loxodromics $(A, B)$ is called \emph{non-singular} if 
 \begin{enumerate}
  \item { $A$ and $B$ does not have a common fixed point.  }

  \medskip \item {$\x_{k,A}\notin L_B^{\perp},\, \x_{k,B}\notin L_{A}^{\perp}$ where $k$ ranges over $n-2$ numbers in $\{1, \ldots, n-1\}$,  i.e. for each such $k$, $x_{k, A}$ has non-zero projection on $L_B$ and, $x_{k, B}$ has non-zero projection on $L_A$. Given a non-singular   pair $(A, B)$, without loss of generality, re-arranging the eigenvectors if necessary, we shall assume that $1 \leq k \leq n-2$.  }
\end{enumerate}
\end{definition}

We shall consider regular non-singular  pairs in the following. Unless otherwise specified, a non-singular  pair will always assumed to be regular. Further, we will always assume, by suitably relabeling the eigenvectors if required, that $\langle \x_{i,A},\a_{B}\rangle\neq 0,\, \langle \x_{i,B},\a_A\rangle \neq 0\text{ for }1\leq i\leq n-2$.
\subsubsection{Eigenpoints of a non-singular   pair} \label{nor}
Let $(A, B)$ be {non-singular  } in $\SU(n,1)$. Without loss of generality,  we may assume that $\langle \x_{i,A},\a_{B}\rangle\neq 0,\, \langle \x_{i,B},\a_A\rangle \neq 0\text{ for }1\leq i\leq n-2$ for any choice of lifts. We choose normalized eigenframes such that
$$
\langle \a_A,\r_A \rangle=\langle \a_B,\r_B\rangle=\langle \a_A,\r_B \rangle=\langle \x_{i,A},\x_{i,A}\rangle =\langle \x_{i,B},\x_{i,B}\rangle= 1,
$$
 $$ \langle \x_{i,A},\a_{B}\rangle,\, \langle \x_{i,B},\a_A\rangle \in \mathbb{R}_{+}\text{ for }1\leq i\leq n-2, ~\langle \x_{1,A},\a_{B}\rangle=1. $$

\bigskip To see that this is possible, suppose for some choice of lifts, 
$$\langle \a_A,\r_A \rangle=\lambda,\,\,\langle \a_B,\r_B\rangle=\mu,\langle \a_A,\r_B \rangle=\nu,\,\,\langle \x_{i,A},\x_{i,A}\rangle=r_i^2,\,\langle \x_{i,B},\x_{i,B}\rangle=s_i^2 ,$$ 
 $$ \langle \x_{i,A},\a_{B}\rangle=\gamma_i,\,\, \langle \x_{i,B},\a_A\rangle=\delta_i ,\text{ where }r_i,s_i\in \mathbb{R}_{+}\text{ for }1\leq i\leq n-2.$$
Let us choose the appropriate lifts in the following way.
\begin{enumerate}
\item First replace $\x_{1,A}$ by $r_1^{-1}\x_{1,A}$, so that $\langle\x_{1,A},\x_{1,A}\rangle=1.$

\item Replace $\a_B$ by $r_1{\bar \gamma_1}^{-1}\a_B$, so that $\langle \x_{1,A},\a_B \rangle =1.$

\item Replace $\r_B$ by $r_1^{-1}\gamma_1{\bar \mu}^{-1}\r_B$, so that $\langle \a_B,\r_B \rangle=1.$

\item Replace $\a_A$ by $r_1{\bar \gamma_1}^{-1}\mu\nu^{-1}\a_A $, so that $\langle \a_A,\r_B \rangle=1.$

\item Replace $\r_A$ by $r_1^{-1}\gamma_1{\bar \lambda}^{-1} {\bar \mu}^{-1} {\bar \nu} \r_A $, so that $\langle \a_A,\r_A \rangle=1.$

\item For $i\neq 1$, replace $\x_{i,A}$ by  $r_i^{-1}e^{i(\text{arg}\gamma_1-\text{arg}\gamma_i)}\x_{i,A}$, so that $\langle \x_{i,A},\x_{i,A}\rangle=1, \\ \text{ and } \langle \x_{i,A},\a_{B}\rangle\in \mathbb{R}_{+}$.

\item For $1\leq i \leq n-1$, replace  $\x_{i,B}$ by $ s_i^{-1}e^{i(\text{arg}\gamma_1+\text{arg}\mu-\text{arg}\nu-\text{arg}\delta_i)}\x_{i,B}$, so that $\langle \x_{i,B},\x_{i,B}\rangle=1, \text{ and } \langle \x_{i,B},\a_A\rangle\in \mathbb{R}_{+}.$
\end{enumerate}

\medskip 
With this normalization, we associate to $(A, B)$ an eigenpoint as in \secref{tol}. We denote it  by $\mathfrak p=(p_{1, A}, \ldots, p_{n, A}, p_{1, B}, \ldots, p_{n, B})$. Note that by regularity, we can ignore the $p_{n+1, A}$ and $p_{n+1, B}$ by \lemref{lem:normalization}.  Note that because of non-singularity, $\p_{i, B}$ can not be equal to $\a_A$, $\r_A$ for all $i$, and similarly, $\p_{i, A}$ can not be equal to $\a_B, ~\r_B$. If some $p_{i, A}$ is equal to $p_{j, B}$, we re-arrange them as before and denote by $(p_1, \ldots, p_t)$. 
\medskip 
\begin{lemma}\label{wd}
Let $(A, B)$ be non-singular   pair in ${\rm SU}(n, 1)$. Suppose that $(p_1, \ldots, p_t)$,\\$(p_1', \ldots, p_t')$
are two tuples of eigenpoints of $(A, B)$. Then $\p_i'=\lambda \p_i$ for some $\lambda\in \mathbb{C}$ with $|\lambda|=1$, $1 \leq i \leq t$. 
\end{lemma}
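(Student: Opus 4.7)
The plan is to compare any two normalized eigenframes by decomposing the change of frame into individual scalars on each eigenvector, and then use the normalization relations of \secref{nor} to force these scalars to collapse to a single common factor of unit modulus.

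First, since both $A$ and $B$ are regular, each of their eigenspaces is one-dimensional, so two normalized eigenframes of $(A,B)$ differ by nonzero scalars on each eigenvector. Write $\a_A'=\alpha\a_A$, $\r_A'=\beta\r_A$, $\a_B'=\gamma\a_B$, $\r_B'=\delta\r_B$, $\x_{i,A}'=\mu_i\x_{i,A}$, $\x_{i,B}'=\nu_i\x_{i,B}$. The pairings $\langle \a_A',\r_A'\rangle=\langle \a_B',\r_B'\rangle=\langle \a_A',\r_B'\rangle=1$ translate to $\alpha\bar\beta=\gamma\bar\delta=\alpha\bar\delta=1$, so that $\gamma=\alpha$ and $\beta=\delta=\bar\alpha^{-1}$.

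Next, I pin down the phases of $\mu_i$ and $\nu_i$. The unit-norm conditions on $\x_{i,A}',\x_{i,B}'$ give $|\mu_i|=|\nu_i|=1$ for all $i$. Here non-singularity is used: for $1\leq i\leq n-2$ it guarantees that $\langle\x_{i,A},\a_B\rangle$ and $\langle\x_{i,B},\a_A\rangle$ are strictly positive real, so the positivity requirements $\langle\x_{i,A}',\a_B'\rangle,\langle\x_{i,B}',\a_A'\rangle\in\R_+$ reduce to $\mu_i\bar\alpha,\nu_i\bar\alpha\in\R_+$. Combined with $|\mu_i|=|\nu_i|=1$, this forces $\mu_i=\nu_i=\alpha/|\alpha|$ for $1\leq i\leq n-2$. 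The anchoring relation $\langle\x_{1,A}',\a_B'\rangle=1$ then becomes $\mu_1\bar\alpha=1$, equivalently $\alpha/|\alpha|=\bar\alpha^{-1}$, which forces $|\alpha|=1$. Consequently $\beta=\delta=\alpha$ and $\mu_i=\nu_i=\alpha$ for $1\leq i\leq n-2$.

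Finally, by regularity, \lemref{lem:normalization} justifies discarding $p_{n+1,A}$ and $p_{n+1,B}$, so the eigenpoint tuple $(p_1,\ldots,p_t)$ involves only $\a_A,\r_A,\a_B,\r_B$ and $\x_{i,A},\x_{i,B}$ for $1\leq i\leq n-2$. Each lift in \eqnref{b1} is linear in these vectors, and every one of them is rescaled by the same $\alpha$ in the primed frame, hence $\p_i'=\alpha\p_i$ for all $i$. Setting $\lambda=\alpha$ gives the claim. The main obstacle is the phase bookkeeping in the middle step: without the single anchoring equation $\langle\x_{1,A},\a_B\rangle=1$, the common factor $\alpha$ would be pinned only up to a positive real scalar, so that pinning is essential; it is equally essential that the unpinned eigenvectors $\x_{n-1,A},\x_{n-1,B}$ do not enter the eigenpoints, a fact secured by regularity.
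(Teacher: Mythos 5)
Your proof is correct and follows essentially the same route as the paper's: both arguments chase the normalization relations $\langle\a_A,\r_B\rangle=\langle\a_B,\r_B\rangle=\langle\x_{1,A},\a_B\rangle=1$, the unit-norm conditions, and the positivity constraints to force all rescaling factors to coincide with a single unimodular $\lambda$. Your version is slightly more systematic in that it introduces all the scalars at once and solves the resulting system (and explicitly records why $\x_{n-1,A},\x_{n-1,B}$ must be excluded), whereas the paper runs the same chain of deductions sequentially starting from a rescaling of $\a_A$; the mathematical content is identical.
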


\begin{proof}
By symmetry, it is enough to prove that if either of $\a_A , \x_{i,A}$ is scaled by $\lambda$, then the normalized  eigenframes are scaled by $\lambda$ and $|\lambda|=1$. First consider the case when $\a_A$ is scaled by $\lambda$.
Then $\langle\a_A , \r_B\rangle = $1 implies that $\r_B$ is scaled by ${\bar \lambda}^{-1}$ . Then $\langle\a_B , \r_B \rangle = 1$ implies $\a_B$ is scaled by $\lambda$. Then $\langle \x_{1,A} ,\a_B\rangle = 1$ implies $\x_{1,A}$ is scaled by ${\bar \lambda}^{-1}$ . Then $\langle \x_{1,A} ,\x_{1,A}\rangle = 1$  implies that $|\lambda|=1$
and so ${\bar \lambda}^{-1}=\lambda$. Then the choice
$$\langle\x_{i,A} , \a_B\rangle, \langle \x_{i,B}, \a_A\rangle \in \mathbb{R}_{+} \text{ for }1 \leq i \leq n-2, $$
implies that $\x_{i,A}$ and $\x_{i,B}$ are scaled by $\lambda$. This proves the lemma.\end{proof}

This shows that a non-singular pair in $\fD(\F_2, \SU(n,1))$  not only projects down to a unique point on $\mathcal L_t$, but to each non-singular pair $(A, B)$ of $\SU(n,1)$, there is a unique tuple of boundary points on $\partial \ch^n$. \\

\begin{acknowledgement}
We thank Michael Cowling, John Parker, Richard Schwartz and Stephan Tillmann  for their comments on this work. 
 
\medskip The work was completed when Gongopadhyay was visiting the University of New South Wales, Sydney. Gongopadhyay thanks the UNSW for hospitality and the Indian \hbox{National} Science Academy (INSA) for supporting the visit by an INSA Indo-Australia EMCR Fellowship.  

\medskip  Parsad acknowledges support from IISER Bhopal project, no. INST/MATH/2017028,  during the course of this work. 
\end{acknowledgement}


\end{document}